\documentclass{amsart}
%
%%%%%%%%%%%%%--PREAMBLE--%%%%%%%%%%%%%%%%%%
%\documentclass[a4paper,10pt]{article}
\usepackage{mathrsfs,amssymb,mathrsfs, multirow,setspace}
%\doublespacing
\onehalfspacing
\usepackage[a4paper, total={7in, 9in}]{geometry}
\usepackage{enumerate}
\theoremstyle{plain}
%\graphicspath{ {images/} }
\usepackage{graphicx}
\usepackage{mathtools}

\newtheorem{theorem}{Theorem}[section]
\newtheorem{lemma}[theorem]{Lemma}
\newtheorem{proposition}[theorem]{Proposition}

\theoremstyle{definition}

\theoremstyle{remark}

\input xy
\xyoption{all}

%%%%%%%%%%%%%%%--BODY--%%%%%%%%%%%%%%%%%%
\begin{document}

	\title[Characterization of rings with genus two cozero-divisor graphs]{Characterization of rings with genus two cozero-divisor graphs}
 
	\author[Praveen Mathil, Barkha Baloda, Jitender Kumar]{Praveen Mathil, Barkha Baloda, Jitender Kumar*}
	\address{Department of Mathematics, Birla Institute of Technology and Science Pilani, Pilani 333031, India}
	\email{maithilpraveen@gmail.com, barkha0026@gmail.com, jitenderarora09@gmail.com}

\begin{abstract}
Let $R$ be a ring with unity. The cozero-divisor graph of a ring $R$ is an undirected simple graph whose vertices are the set of all non-zero and non-unit elements of $R$ and two distinct vertices $x$ and $y$ are adjacent if and only if $x \notin Ry$ and $y \notin Rx$. The reduced cozero-divisor graph of a ring $R$ is an undirected simple graph whose vertex set is the set of all nontrivial principal ideals of $R$ and two distinct vertices $(a)$ and $(b)$ are adjacent if and only if $(a) \not\subset (b)$ and $(b) \not\subset (a)$. In this paper, we characterize all classes of finite non-local commutative rings for which the cozero-divisor graph and reduced cozero-divisor graph are of genus two.

% Jesili \emph{et al.} [On the genus of reduced cozero-divisor graph of commutative
% rings. \textit{Soft Comput.}, 27(2):657–666, 2023.] classified all non-local commutative rings for whose reduced cozero-divisor graph is of genus one. In this paper we characterize all class of finite non-local commutative rings whose reduced cozero-divisor graph is of genus two. Moreover, we classify all the non-local commutative rings whose cozero-divisor graph is of genus two.
\end{abstract}

\subjclass[2020]{05C25}
\keywords{Non-local ring, ideals, genus, nilpotent index of ideal.\\ *  Corresponding author}
\maketitle

\section{Historical background and main results}
The study of algebraic structures through the properties of their associated graphs has appeared as a fascinating research topic in the past three decades. Associating a graph to an algebraic structure leads to an interplay between algebraic properties of respective algebraic structure and graph theoretic properties of their associated graphs. Motivated by certain applications of graphs associated to algebraic structures (see \cite{cooperman1990applications, radha2021frequency}), various authors introduced some more graphs associated to algebraic objects. Numerous aspects of certain graphs  including zero-divisor graph, cozero-divisor graph, total graph, co-maximal graph, annihilating-ideal graph etc. (see \cite{afkhami2011cozero,afkhami2012generalized, akbari2009total, anderson2008zero, anderson2008total, anderson1999zero, behboodi2011annihilating, biswas2022subgraph, maimani2008comaximal}), associated to ring structures have been studied in the literature.  Topological graph theory is mainly concerned with the embedding of a graph into a surface without edge crossing. Its applications lie in electronic printing circuits where the purpose is to embed a circuit, that is, the graph on a circuit board (the surface), without two connections crossing each other, resulting in a short circuit. Determining the genus $g(\Gamma)$ of a graph $\Gamma$ is a fundamental but highly complex problem. It is indeed NP-complete. Numerous authors have studied the problem of finding the genus of various algebraic graphs. For example, the genus of zero divisor graphs of rings has been investigated in \cite{akbari2003zero, asir2020classification, belshoff2007planar,  wang2006zero, wickham2008classification, wickham2009rings}. The research work on the genus of various other graphs associated to rings can be found in \cite{asir2014genus, maimani2012rings, selvakumar2018commutative, chelvam2013genus} and references therein.
 
 Afkhami \emph{et al.} \cite{afkhami2011cozero} introduced the cozero-divisor graph of a commutative ring. The \emph{cozero-divisor graph} of a ring $R$ with unity, denoted by $\Gamma'(R)$, is an undirected simple graph whose vertex set is  the set of all non-zero and non-unit elements of $R$ and two distinct vertices $x$ and $y$  are adjacent if and only if $x \notin Ry$ and $y \notin Rx$. Together with the relation between the zero-divisor graph and the cozero-divisor graph, Afkhami \emph{et al.} \cite{afkhami2011cozero} studied the basic graph-theoretic properties including completeness, girth, clique number, etc. Then Kavitha \emph{et al.} \cite{kavitha2017genus} determined all the non-local commutative rings $R$ such that $g(\Gamma'(R)) = 1$. For more results on cozero-divisor graphs of rings, we refer the reader to \cite{afkhami2012cozero, afkhami2012planar, afkhami2013cozero} and references therein. Further, Wilkens \emph{et al.} \cite{wilkens2011reduced} introduced the reduced cozero-divisor graph of commutative rings. The \emph{reduced cozero-divisor graph} of a ring $R$, denoted by $\Gamma_r(R)$, is the simple undirected graph with vertex set as the set of all nontrivial principal ideals of $R$ and two distinct vertices $(a)$ and $(b)$ are adjacent  if and only if $(a) \not\subset (b)$ and $(b) \not\subset (a)$. Kavitha \emph{et al.} \cite{kavitha2017genus} determined all the finite non-local commutative rings whose reduced cozero-divisor graph is planar. Further, Jesili \emph{et al.} \cite{jesili2022genus} classified all the non-local commutative rings $R$ such that $g(\Gamma_r(R)) = 1$. The classification of all the non-local commutative rings whose reduced cozero-divisor graphs and cozero-divisor graphs are of genus two has not been investigated yet. In this paper, we characterize all the finite non-local commutative rings for which each of the graph $\Gamma'(R)$ and $\Gamma_r(R)$, is of genus two. The next two theorems are main results of this manuscript.

\begin{theorem}\label{redcozerogenus2}
 Let $R \cong R_1 \times R_2 \times \cdots \times R_n$ $(n \ge 2)$ be a non-local commutative ring, where each $R_i$ is a local ring with maximal ideal $\mathcal{M}_i$ and let $\eta_{i}$ be the nilpotent index of $\mathcal{M}_i$. Then $g(\Gamma_r(R)) = 2$ if and only if either $R \cong R_1 \times F_2$, where $\mathcal{M}_1$ is a principal ideal and $\eta_{1} =7$, or $R \cong R_1 \times R_2$, where both $\mathcal{M}_1$ and $\mathcal{M}_2$ are principal ideals such that $\eta_{1} =4$ and $\eta_{2} =2$.  
\end{theorem}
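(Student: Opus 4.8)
The plan is to translate the genus computation into a question about the incomparability graph of a finite poset and then treat the two implications separately. First I would record the standard reduction: a finite commutative ring is a product of local rings, and in $R_1\times\cdots\times R_n$ every principal ideal has the form $(a_1)\times\cdots\times(a_n)$, with $(a_1)\times\cdots\times(a_n)\subseteq(b_1)\times\cdots\times(b_n)$ if and only if $(a_i)\subseteq(b_i)$ for every $i$. Hence $\Gamma_r(R)$ is precisely the incomparability graph of the poset $P(R)$ of nontrivial principal ideals. When $\mathcal{M}_i$ is principal, $R_i$ is a finite chain ring whose ideals form a chain of length $\eta_i+1$, so $P(R)$ is the product of chains $C_{\eta_1+1}\times\cdots\times C_{\eta_n+1}$ with its greatest and least elements deleted; when some $\mathcal{M}_i$ is non-principal, $R_i$ already contributes an antichain of at least $q_i+1$ principal ideals (the lines of $\mathcal{M}_i/\mathcal{M}_i^2$), which becomes a clique in $\Gamma_r(R)$. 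I would also prove a monotonicity lemma: enlarging any factor, raising any $\eta_i$, or making some $\mathcal{M}_i$ non-principal produces a ring whose graph contains the previous one as a subgraph, so $g(\Gamma_r(\cdot))$ is nondecreasing along these operations. This is what reduces the classification to finitely many boundary cases.

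For the sufficiency I would analyze the two candidate graphs explicitly. For $R\cong R_1\times F_2$ with $\eta_1=7$ the poset is $C_8\times C_2$ minus its extremes, and $\Gamma_r(R)$ is the bipartite \emph{half graph} on parts of size $7$ and $7$ ($V=14$, $E=28$), in which the $i$-th ideal of one side is adjacent to exactly those ideals $j<i$ of the other side. For $R\cong R_1\times R_2$ with $\eta_1=4,\eta_2=2$, $\Gamma_r(R)$ is the incomparability graph of the $5\times 3$ grid poset ($V=13$, $E=30$). In each case I must show $g=2$, i.e. both $g\le 2$ and $g\ge 2$. For the upper bound I would exhibit an explicit rotation system (equivalently a drawing on the double torus) realizing the graph with $V-E+F=2-2g$ yielding $g=2$. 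For the lower bound in the half-graph case, deleting the two pendant vertices leaves a connected bipartite subgraph with $V'=12$, $E'=26$, whence the girth-$4$ Euler inequality $g\ge\lceil(E'-2V'+4)/4\rceil=\lceil 6/4\rceil=2$ finishes it; for the grid case, which contains triangles, the plain Euler bound is far too weak, so I would instead force $g\ge 2$ either by extracting a genus-$2$ minor or by a direct argument that no rotation system embeds it on the torus.

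For the necessity I would first invoke the existing classifications---Kavitha \emph{et al.} for planar $\Gamma_r(R)$ and Jesili \emph{et al.} for $g(\Gamma_r(R))=1$---to remove every ring with $g\le 1$. It then remains to show that every finite non-local ring not isomorphic to one of the two listed rings satisfies $g\ge 3$, so that $g=2$ forces the stated form. By the monotonicity lemma this reduces to a short list of minimal cases: the first rings beyond the two listed ones ($R_1\times F_2$ with $\eta_1=8$, and $R_1\times R_2$ with $(\eta_1,\eta_2)=(5,2)$), the smallest genuine three-factor products, and the smallest ring with a non-principal $\mathcal{M}_i$. Each such lower bound I would obtain by exhibiting a subgraph that is either complete or complete bipartite of genus at least $3$ (using $g(K_n)=\lceil(n-3)(n-4)/12\rceil$ and $g(K_{m,n})=\lceil(m-2)(n-2)/4\rceil$), or dense enough that Euler's inequality---applied after deleting low-degree vertices and, where useful, after decomposing into blocks via the Battle--Harary--Kodama--Youngs additivity of genus---forces $g\ge 3$.

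The hardest parts will be the explicit genus-two embeddings for the sufficiency and the torus-nonembeddability for the grid graph, together with the bookkeeping that makes the necessity exhaustive. The half-graph case is comfortable, being bipartite with a sharp Euler bound after trimming; the grid case is the genuine obstacle, since it is sparse and triangle-laden, so neither the edge count nor block additivity alone rules out the torus, and I expect to need either an explicit genus-$2$ minor or a hands-on verification that no rotation system embeds it in $S_1$. Finally, I would have to be meticulous in the necessity that the monotonicity reductions cover every combination of $n$, the indices $\eta_i$, and the principality of the $\mathcal{M}_i$, so that no ring of genus exactly two is overlooked outside the two families in the statement.
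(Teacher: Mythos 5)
Your overall architecture matches the paper's: reduce to $n=2$ via $K_{5,5}$-type obstructions, strip out the genus $\le 1$ rings using the Kavitha et al.\ and Jesili et al.\ classifications, kill the remaining boundary cases, and exhibit explicit $\mathbb{S}_2$ embeddings for sufficiency. The incomparability-graph/chain-product reformulation and the vertex/edge counts ($14$ vertices, $28$ edges for $\eta_1=7$ over a field; $13$ vertices, $30$ edges for $(\eta_1,\eta_2)=(4,2)$) are correct, and your trimmed Euler bound $g\ge\lceil(26-24+4)/4\rceil=2$ for the half graph is sound. Note also that the step you flag as ``the genuine obstacle'' --- proving the $5\times 3$ grid graph does not embed in the torus --- is free: that ring is in neither the planar list (Theorem 2.10 of the paper) nor the genus-one list (Theorem 2.4), so $g\ge 2$ follows from results you already invoke. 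However, there are two genuine gaps. First, your list of minimal cases for the necessity is incomplete. Under your monotonicity order the set of pairs $(\eta_1,\eta_2)$ that must be shown to have $g\ge 3$ is everything with $\eta_1\ge 5$ or $\eta_2\ge 3$ (taking $\eta_1\ge\eta_2$), whose minimal elements are $(5,2)$ \emph{and} $(3,3)$; you list only $(5,2)$. The pair $(3,3)$ is incomparable to $(5,2)$, is not covered by any ring on your list, and is one of the harder cases in the paper (it requires a bespoke subgraph isomorphic to $K_{5,5}$ minus four edges, an Euler count to get $g\ge 2$, and then a face-tracing argument to reach $g\ge 3$). Similarly, ``the smallest ring with a non-principal $\mathcal{M}_i$'' is not a single minimal instance: when $\mathcal{M}_1=(a_1,a_2)$ with $a_1^2=a_2^2=0$ and $|U(R_1)|=1$ the genus is actually $1$, so the non-principal case splits into several incomparable subcases ($a_i^2\ne 0$ or not, $|U(R_1)|\ge 2$, $R_2$ a field or not), each needing its own obstruction.

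Second, the lower-bound toolkit you propose for the necessity --- complete or complete bipartite subgraphs of genus $\ge 3$, the Euler inequality, and block additivity --- does not suffice for several of these boundary cases. For $(\eta_1,\eta_2)=(5,2)$ the graph has $16$ vertices, $45$ edges, girth $3$, and clique number $3$ (a maximum antichain in $C_6\times C_3$ has three elements), so the Euler bound gives only $g\ge\lceil(45-48+6)/6\rceil=1$, there is no $K_7$, and no $K_{5,5}$ is apparent; the graph is $2$-connected after trimming, so block additivity buys nothing. The same problem occurs for $(3,3)$ ($14$ vertices, $36$ edges) and for the non-principal case with $|U(R_1)|=2$. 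The paper resolves every one of these by an incremental embedding argument: start from a small subgraph of known genus, show that certain mutually adjacent vertices must be inserted into a single face of any embedding, observe that their combined neighbourhoods force too many vertices onto that face's boundary, and conclude the genus must increase; iterating this twice yields $g\ge 3$. Your plan contains no analogue of this technique, and without it the necessity direction cannot be closed for the sparse, triangle-laden graphs just outside the two families in the statement. Your own closing caveat about meticulous bookkeeping is exactly where the proposal currently fails.
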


\begin{theorem}\label{genusofR1R2}
   Let $R \cong R_1 \times R_2 \times \cdots \times R_n$ $(n \ge 2)$ be a non-local commutative ring. Then $g(\Gamma'(R)) = 2$ if and only if $R$ is isomorphic to one of the following $13$ rings: 
  \[ \mathbb{F}_4 \times \mathbb{F}_8, \ \mathbb{F}_4 \times \mathbb{F}_9, \ \mathbb{F}_4 \times \mathbb{Z}_{11}, \ \mathbb{Z}_5 \times \mathbb{Z}_7, \ \mathbb{Z}_4 \times \mathbb{Z}_4, \ \mathbb{Z}_4 \times \dfrac{\mathbb{Z}_2[x]}{( x^2 )}, \ \dfrac{\mathbb{Z}_2[x]}{( x^2 )} \times \dfrac{\mathbb{Z}_2[x]}{( x^2 )}, \ \mathbb{Z}_4 \times \mathbb{F}_8, \ \dfrac{\mathbb{Z}_2[x]}{( x^2 )}\times \mathbb{F}_8, \ \mathbb{Z}_4 \times \mathbb{F}_9, \]
  \[ \dfrac{\mathbb{Z}_2[x]}{( x^2 )} \times \mathbb{F}_9, \ \dfrac{\mathbb{Z}_2[x]}{( x^2 )}\times \mathbb{Z}_{11}, \ \mathbb{Z}_4 \times \mathbb{Z}_{11}.
  \]
\end{theorem}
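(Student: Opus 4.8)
The plan is to first reduce to the case $n = 2$ and then split into sub-cases according to how many factors are fields. The central tool is a description of $\Gamma'(R_1\times R_2)$ in terms of the principal-ideal lattices of the factors: writing $Rx = (a)\times(b)$ for $x=(a,b)$, two vertices $x=(a,b)$ and $y=(c,d)$ are adjacent exactly when the pairs $((a),(b))$ and $((c),(d))$ are \emph{incomparable} in the product of the two ideal lattices. Hence $\Gamma'(R_1\times R_2)$ is a blow-up of the incomparability graph of this product poset, each node replaced by an independent set whose size counts the associates generating that ideal. In particular the isomorphism type depends only on the lattices, which is why $\mathbb{Z}_4$ and $\mathbb{Z}_2[x]/(x^2)$ (identical lattices $0\subsetneq\mathcal{M}\subsetneq L$) are interchangeable throughout. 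To eliminate $n\ge 3$, I would exhibit inside $\Gamma'(R)$ large complete bipartite subgraphs coming from unit-supported vertices such as $\{(u,0,\dots):u\in R_1^\times\}$ against vertices supported on the complementary coordinates; their genus already exceeds $2$ once the factors are large or numerous, while the finitely many small survivors (starting from $\mathbb{F}_2^3$, whose graph is the planar triangular prism) are checked by hand to have genus $\ne 2$.

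For $n=2$ with both factors fields the computation is clean: no two vertices supported on the same coordinate are adjacent and any two with disjoint support are, so $\Gamma'(\mathbb{F}_{q_1}\times\mathbb{F}_{q_2})\cong K_{q_1-1,\,q_2-1}$. Applying the Ringel formula $g(K_{m,n})=\lceil (m-2)(n-2)/4\rceil$, the condition $g=2$ becomes $5\le(q_1-3)(q_2-3)\le 8$, whose prime-power solutions yield exactly $\mathbb{F}_4\times\mathbb{F}_8$, $\mathbb{F}_4\times\mathbb{F}_9$, $\mathbb{F}_4\times\mathbb{Z}_{11}$ and $\mathbb{Z}_5\times\mathbb{Z}_7$.

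For $n=2$ with at least one non-field factor I would first use the unit-vertex subgraph $K_{|R_1^\times|,\,|R_2^\times|}$ together with the blow-up description to force every non-field factor to be one of the two local rings of order $4$. Writing $R=L\times\mathbb{F}_q$ with $|L|=4$, the blow-up collapses $\Gamma'(R)$ to the bipartite graph on parts $\{(1,0),(1+m,0),(m,0)\}$ and $\{(m,b):b\neq0\}\cup\{(0,v):v\neq0\}$, in which $(1,0),(1+m,0)$ join everything while $(m,0)$ joins only the $(0,v)$'s; this is a $K_{3,q-1}$ with $q-1$ extra degree-two vertices, bipartite with $V=2q+1$ and $E=5(q-1)$. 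The bipartite Euler bound $g\ge\lceil (E-2V+4)/4\rceil=\lceil (q-3)/4\rceil$ then singles out $q\in\{8,9,11\}$, giving the six rings $L\times\mathbb{F}_8,\,L\times\mathbb{F}_9,\,L\times\mathbb{Z}_{11}$. The final case $R=L_1\times L_2$ with both factors local of order $4$ produces a single graph up to isomorphism on $11$ vertices and $29$ edges, namely the multiplicity blow-up of the incomparability graph of $\{Z<M<T\}^2\setminus\{(Z,Z),(T,T)\}$, for which $g=2$ must be established directly.

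The hard part will be the upper bounds, i.e. explicit genus-two embeddings matching the Euler lower bounds. For $L\times\mathbb{F}_q$ one must check that the $q-1$ degree-two vertices can be routed into faces of a minimal-genus embedding of $K_{3,q-1}$ incident to both apices without raising the genus; for $L_1\times L_2$ the Euler bound only yields $g\ge 1$, so genus $2$ requires an ad hoc rotation-system embedding on the double torus together with a separate argument (a forbidden-subgraph or refined face-count) excluding a toroidal embedding. The same embedding constructions are what rule out the borderline cases that the formula flags — $L\times\mathbb{F}_7$ (predicted toroidal) and $L\times\mathbb{F}_{13}$ (predicted genus $3$) — and settle the finitely many $n\ge 3$ products; essentially all of the work lies there.
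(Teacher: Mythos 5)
Your skeleton is essentially the paper's: reduce to $n=2$, identify $\Gamma'(\mathbb{F}_{q_1}\times\mathbb{F}_{q_2})$ with $K_{q_1-1,q_2-1}$ and apply the Ringel--Youngs formula, then bound the non-field factors by exhibiting large complete bipartite subgraphs on unit-supported vertices, finishing with explicit genus-two embeddings. Where you differ is in packaging: your ``blow-up of the incomparability graph of the product of principal-ideal posets'' is a clean reformulation the paper never states explicitly (it works with elements directly), and it does justify at a stroke the interchangeability of $\mathbb{Z}_4$ and $\mathbb{Z}_2[x]/(x^2)$. Your Euler-formula lower bound $g\ge\lceil(q-3)/4\rceil$ for $L\times\mathbb{F}_q$ is also a genuine improvement in economy: it gives $g\ge 2$ for $q\in\{8,9,11\}$ and $g\ge 3$ for $q\ge 13$ in one line, where the paper instead builds an explicit $K_{3,12}$ for $q\ge 13$ and quotes the planar/toroidal classifications of Afkhami--Khashyarmanesh and Kavitha--Kala for $q\le 7$; note, though, that a lower bound cannot by itself ``single out'' $q\in\{8,9,11\}$ --- you still need $g\le 1$ for $q\le 7$, which you do acknowledge.

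Two concrete corrections. First, for $L_1\times L_2$ with both factors local of order $4$ the graph has $11$ vertices but $25$ edges, not $29$: the seven principal ideals have multiplicities $2,1,2,1,1,2,2$ and exactly $9$ incomparable pairs, giving $4+2+2+4+2+1+2+4+4=25$. With the correct count the Euler bound gives only $g\ge 0$ (the graph has triangles), so your claimed ``$g\ge 1$'' evaporates and the entire burden of $g\ge 2$ falls on the ad hoc argument you defer; the paper sidesteps this by citing the known genus-one classification, which does not contain $\mathbb{Z}_4\times\mathbb{Z}_4$. Second, the $n\ge 3$ ``small survivors'' step is much heavier than ``checked by hand'' suggests: after the complete bipartite eliminations the cases $\mathbb{Z}_3\times\mathbb{Z}_3\times\mathbb{Z}_2$ and $R_1\times\mathbb{Z}_2\times\mathbb{Z}_2$ with $|R_1|=4$ survive, and for the latter the paper needs a multi-stage face-tracing argument (building up from a $K_{3,3}$ through two successive vertex insertions) to push the genus to at least $3$. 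These are deferrals you flag rather than errors, but they are where most of the proof actually lives, so the proposal as written is a correct plan rather than a proof.
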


In the following section, we discuss some basic definitions and related results which we need to understand the remaining part of the present paper.

 \section{preliminaries}
In this section, first we recall the graph theoretic notions from  \cite{westgraph} and topological graph theory from \cite{white1985graphs}. 
A \emph{graph} $\Gamma$ is a pair  $\Gamma = (V, E)$, where $V(\Gamma)$ and $E(\Gamma)$ are the set of vertices and edges of $\Gamma$, respectively. Two distinct vertices $u_1, u_2$ are $\mathit{adjacent}$, denoted by $u_1 \sim u_2$, if there is an edge between $u_1$ and $u_2$. Otherwise, we write it as $u_1 \nsim u_2$. Let $\Gamma$ be a graph. A \emph{subgraph}  $\Gamma'$ of $\Gamma$ is the graph such that $V(\Gamma') \subseteq V(\Gamma)$ and $E(\Gamma') \subseteq E(\Gamma)$. If $X \subseteq V(\Gamma)$, then the subgraph $\Gamma(X)$ of $\Gamma$ induced by $X$, is the graph with vertex set $X$ and two vertices of $\Gamma(X)$ are adjacent if and only if they are adjacent in $\Gamma$. A graph $\Gamma$ is said to be $complete$ if every two distinct vertices are adjacent. The complete graph on $n$ vertices is denoted by $K_n$.

A graph $\Gamma$ is said to be a \emph{bipartite graph} if vertex set of $\Gamma$ can be divided into two disjoint sets $U$ and $V$ such that every edge connects a vertex in $U$ to a vertex in $V$ and no two vertices of $U$(or $V$) are adjacent. A graph $\Gamma$ is said to be a \emph{complete bipartite graph} if the vertex $V(\Gamma)$ can be partitioned into two nonempty sets $A$ and $B$ such that two distinct vertices are adjacent if and only if they belong to different sets. Moreover, if $|A|= m$ and $|B| = n$ then, we denote it by $K_{m,n}$.
 The \emph{subdivision} of the edge $(u,v)$ of a graph $\Gamma$ is the deletion of the $(u,v)$ from $\Gamma$ and the addition of two edges $(u,w)$ and $(w,v)$ along with a new vertex $w$. A graph obtained from $\Gamma$ by a sequence of edge subdivisions is called a \emph{subdivision of $\Gamma$}. The \emph{Contraction} of an edge $(u,v)$, denoted by $[u,v]$, is an operation that delete the edge $(u,v)$ and merges the vertices $u$ and $v$ in the graph. An undirected graph $H$ is a \emph{minor} of another undirected graph $\Gamma$ if $H$ can be obtained from $\Gamma$ by contracting or deleting some edges. Two graphs are said to be \emph{homeomorphic} if both can be obtained from the same graph by subdivision or contracting of edges.

A compact connected topological space  such that each point has a neighbourhood homeomorphic to an open disc is called a surface. Let $\mathbb{S}_{g}$ be the orientable surface with $g$ handles, where $g$ is a non-negative integer. The genus $g(\Gamma)$ of a graph $\Gamma$ is the minimum integer $g$ such that the graph can be embedded in $\mathbb{S}_{g}$, i.e. the graph $\Gamma$ can be drawn into a surface $\mathbb{S}_{g}$ without edge crossing. Note that the graphs having genus $0$ are planar, and graphs having genus $1$ are toroidal. The following results are useful in the sequel, and we will use them frequently without refering to it.

\begin{proposition}{\cite[Ringel and Youngs]{white1985graphs}}
\label{genus}
 Let $m, n$ be a positive integer. Then \\
${\rm(i)}~~ g(K_n) = \left\lceil \frac{(n-3)(n-4)}{12} \right\rceil ~~~~~\text{if} ~~~n \geq 3$\\
${\rm(ii)}~~ g(K_{m,n}) =  \left\lceil \frac{(m-2)(n-2)}{4}  \right\rceil~~ \textit{if}~~ m, n\geq 2$.
 \end{proposition}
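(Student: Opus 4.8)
The plan is to prove each formula by combining a lower bound obtained from Euler's formula with a matching explicit embedding. I would run the two parts (i) and (ii) in parallel, since the lower-bound arguments are structurally identical and differ only in the girth of the graph.

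For the lower bounds I would begin from the Euler--Poincar\'e formula $V - E + F = 2 - 2g$, which holds for a cellular embedding of a connected graph in $\mathbb{S}_g$; since a minimum-genus embedding is automatically cellular (a theorem of Youngs), it suffices to bound $g$ for such an embedding. For $K_n$ we have $V = n$ and $E = \binom{n}{2}$. As $K_n$ has girth $3$, every face is bounded by at least three edges while each edge borders exactly two faces, so $3F \leq 2E$. Substituting $F \leq \tfrac{2E}{3}$ into Euler's formula and solving for $g$ gives $g \geq 1 - \tfrac{n}{2} + \tfrac{n(n-1)}{12} = \tfrac{(n-3)(n-4)}{12}$; as $g$ is an integer we obtain $g \geq \lceil (n-3)(n-4)/12 \rceil$. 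For $K_{m,n}$ I would use $V = m+n$ and $E = mn$, together with the observation that a bipartite graph contains no odd cycle and hence has girth at least $4$, so every face is bounded by at least four edges and $4F \leq 2E$. The analogous substitution yields $g \geq \tfrac{(m-2)(n-2)}{4}$, and taking ceilings finishes the lower bound for (ii).

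The harder half is to show these lower bounds are attained, that is, to construct an orientable embedding of $K_n$ (respectively $K_{m,n}$) in which every face is a triangle (respectively a quadrilateral)---a triangular (respectively quadrangular) embedding, for which the inequalities above become equalities. The standard tool is the theory of rotation systems: prescribing at each vertex a cyclic order of the incident edges determines a $2$-cell embedding and its faces, and the task is to exhibit a rotation system realizing the minimal genus. This is precisely the content of the Ringel--Youngs solution of the Heawood map-colour problem for $K_n$ and of Ringel's theorem on the genus of $K_{m,n}$.

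The main obstacle, and the reason this is a deep theorem rather than an exercise, is exactly this constructive step. For $K_n$ no single uniform construction works: the argument splits into the twelve residue classes of $n \pmod{12}$, and many classes require the machinery of current graphs (with additional ad hoc constructions in exceptional cases such as $n \equiv 7 \pmod{12}$ and for several small values of $n$) to produce a rotation scheme whose faces are all triangles. The quadrangular embeddings needed for $K_{m,n}$ are likewise built case by case. Since the present paper uses these formulas only as a black box to compute the genus of specific small graphs, I would not reproduce the full construction; instead I would present the Euler-formula lower bounds in detail and cite the Ringel--Youngs theorem, as recorded in \cite{white1985graphs}, for the matching upper bounds.
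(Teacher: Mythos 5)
The paper does not prove this statement at all---it is quoted from White's book and attributed to Ringel and Youngs---so your proposal, which correctly derives the Euler-formula lower bounds $g(K_n)\ge\left\lceil \frac{(n-3)(n-4)}{12}\right\rceil$ (via $3F\le 2E$ for girth $3$) and $g(K_{m,n})\ge\left\lceil \frac{(m-2)(n-2)}{4}\right\rceil$ (via $4F\le 2E$ for bipartite girth $4$), and then cites the Ringel--Youngs constructions for the matching triangular and quadrangular embeddings, handles the deep half exactly as the paper does, by citation. Your algebra is right, your appeal to cellularity of minimum-genus embeddings is the correct technical justification for applying Euler's formula, and your identification of the constructive step (rotation systems, current graphs, the twelve residue classes of $n \bmod 12$) as the real content of the theorem is accurate; since the paper uses these formulas only as a black box on small graphs, deferring that step to the literature is the right call.
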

 
 \begin{lemma}{\cite[Theorem 5.14]{white1985graphs}}\label{eulerformulagenus}
 Let $\Gamma$ be a connected graph, with a 2-cell embedding in $\mathbb{S}_{g}$. Then $v - e + f = 2 - 2g$, where $v, e$ and $f$ are the number of vertices, edges and faces embedded in $\mathbb{S}_{g}$, respectively and $g$ is the genus of the surface of a graph embedded. 
\end{lemma}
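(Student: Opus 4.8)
The plan is to deduce the formula from the topological invariance of the Euler characteristic, since a 2-cell embedding is precisely the combinatorial data of a CW-decomposition of the surface. First I would recall the standard fact that the orientable surface $\mathbb{S}_{g}$ with $g$ handles has Euler characteristic $\chi(\mathbb{S}_{g}) = 2 - 2g$; this is a topological invariant of the surface and is independent of any particular cell structure placed on it. The hypothesis that the embedding is 2-cell means every face is homeomorphic to an open disc, so the vertices, edges, and faces serve respectively as the $0$-cells, $1$-cells, and $2$-cells of a CW-complex whose underlying space is $\mathbb{S}_{g}$. Computing the Euler characteristic from this decomposition as the alternating sum of the numbers of cells of each dimension gives $\chi(\mathbb{S}_{g}) = v - e + f$, and equating the two expressions yields $v - e + f = 2 - 2g$.

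If one prefers a self-contained combinatorial argument avoiding algebraic topology, I would instead proceed by induction on $g$. For the base case $g = 0$ I would prove the classical polyhedral formula $v - e + f = 2$ for a connected plane (equivalently, sphere) graph by induction on the number of edges: a spanning tree has $v - 1$ edges and bounds a single face, so $v - (v-1) + 1 = 2$, and each additional edge closes up a cycle and splits one face into two, increasing both $e$ and $f$ by one and leaving $v - e + f$ unchanged. For the inductive step I would use a handle decomposition of $\mathbb{S}_{g}$: given a 2-cell embedding in $\mathbb{S}_{g}$ with $g \ge 1$, one locates a suitable non-separating simple closed curve running through a handle, cuts the surface along it, and caps off the two resulting boundary circles with discs, producing a 2-cell embedding in $\mathbb{S}_{g-1}$ in which $v$, $e$, and $f$ change in a controlled way so that $v - e + f$ increases by exactly $2$, matching the increase of $\chi$.

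The step I expect to be the main obstacle is this inductive reduction of the genus in the combinatorial route: one must arrange the cutting curve so that it meets the existing cell structure in a way that preserves the 2-cell property, and then track precisely how $v$, $e$, and $f$ change so that the net effect on $v - e + f$ is exactly $+2$ for each handle removed, handling degenerate configurations carefully. This is the reason the cleaner argument routes the computation through the invariance of $\chi$, from which the formula follows at once once the 2-cell hypothesis is identified with a CW-structure. Since the statement is quoted here as \cite[Theorem 5.14]{white1985graphs}, in practice I would simply invoke that reference, but the invariance-of-$\chi$ argument above is the conceptual proof underlying it.
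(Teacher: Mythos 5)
The paper offers no proof of this lemma at all: it is quoted verbatim as Theorem 5.14 of White's \emph{Graphs, groups and surfaces} and used as a black box, so there is no internal argument to compare yours against. Your first argument --- identifying a 2-cell embedding with a CW-decomposition of $\mathbb{S}_g$, computing $\chi$ as the alternating sum $v-e+f$ of cell counts, and equating this with the topological invariant $\chi(\mathbb{S}_g)=2-2g$ --- is the standard and correct proof, and it is essentially the one found in White and in most topological graph theory texts. The connectedness hypothesis is what guarantees (together with the 2-cell condition) that the vertices, edges, and faces genuinely assemble into a CW-structure on the whole surface, so it is worth flagging that this is where that hypothesis enters. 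Your alternative combinatorial route (Euler's polyhedral formula for $g=0$ via spanning trees, then induction on $g$ by cutting along a non-separating curve and capping with discs) is also viable, and you correctly identify its delicate point: arranging the cutting curve to respect the cell structure and verifying that $v-e+f$ increases by exactly $2$ per handle. Either way, since the paper itself treats this as a citation, invoking the reference, as you note at the end, is all that is expected here; your write-up supplies the underlying justification correctly.
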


\begin{lemma}\cite{white2001graphs}\label{genusofblocks}
The genus of a connected graph $\Gamma$ is the sum of the genera of its blocks.
\end{lemma}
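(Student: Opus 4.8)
The plan is to prove Lemma~\ref{genusofblocks}, which asserts that the genus of a connected graph is the sum of the genera of its blocks. Recall that a \emph{block} of a connected graph $\Gamma$ is a maximal connected subgraph having no cut-vertex; the blocks of $\Gamma$ partition its edge set and any two distinct blocks share at most one vertex (necessarily a cut-vertex). Since this is cited as a standard result from \cite{white2001graphs}, my aim is to reconstruct the natural proof rather than to discover something new. The statement reduces, by an obvious induction on the number of blocks, to the case of a graph $\Gamma = \Gamma_1 \cup \Gamma_2$ where $\Gamma_1$ and $\Gamma_2$ are connected subgraphs sharing exactly one vertex $w$; it then suffices to show $g(\Gamma) = g(\Gamma_1) + g(\Gamma_2)$.

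First I would prove the upper bound $g(\Gamma) \le g(\Gamma_1) + g(\Gamma_2)$ by an explicit surface-gluing argument. Take a minimal genus $2$-cell embedding of $\Gamma_i$ in $\mathbb{S}_{g(\Gamma_i)}$ for each $i$. Since $w$ lies on the boundary of some face in each embedding, I would cut a small open disc out of a face of $\mathbb{S}_{g(\Gamma_1)}$ incident to $w$ and likewise for $\mathbb{S}_{g(\Gamma_2)}$, arranging the cut so that $w$ sits on the resulting boundary circle in both surfaces. Gluing the two surfaces along these boundary circles (identifying the two copies of $w$) performs a connected sum; the genus of a connected sum is additive, so the resulting surface has genus $g(\Gamma_1) + g(\Gamma_2)$, and it carries an embedding of $\Gamma = \Gamma_1 \cup \Gamma_2$. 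This yields the desired inequality.

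The reverse inequality $g(\Gamma) \ge g(\Gamma_1) + g(\Gamma_2)$ is the more delicate half and is where I expect the main obstacle to lie. The idea is to start from a minimal genus embedding of $\Gamma$ in $\mathbb{S}_{g(\Gamma)}$ and to \emph{split} the surface at the cut-vertex $w$, separating the rotation scheme at $w$ into the part belonging to $\Gamma_1$ and the part belonging to $\Gamma_2$. Because $w$ is a cut-vertex, no edge of $\Gamma_1$ meets an edge of $\Gamma_2$ except at $w$ itself, so one can surger the embedding along a small circle around $w$: cut the surface open and cap off each side with a disc, producing two closed surfaces $\Sigma_1, \Sigma_2$ in which $\Gamma_1, \Gamma_2$ respectively embed. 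The key combinatorial step is to verify, via the Euler formula of Lemma~\ref{eulerformulagenus}, that the genera add up correctly, i.e.\ $g(\Sigma_1) + g(\Sigma_2) \le g(\Gamma)$; tracking how the vertex, edge, and face counts redistribute under the surgery, and confirming that no genus is lost, is the technical heart of the argument. Combining the two inequalities gives $g(\Gamma) = g(\Gamma_1) + g(\Gamma_2)$, and the general statement follows by induction over the block tree of $\Gamma$.

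I would remark that the additivity of genus over connected sums, which both halves rely on, is the essential topological input; once that and the edge-disjointness of blocks meeting only at cut-vertices are in hand, the result is a clean bookkeeping exercise. In the present paper this lemma is the workhorse that lets us compute the genus of $\Gamma'(R)$ and $\Gamma_r(R)$ block by block, so a fully rigorous proof is not reproduced here and we simply cite \cite{white2001graphs}.
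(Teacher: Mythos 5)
The paper does not prove this lemma at all: it is quoted verbatim from \cite{white2001graphs} (it is the classical additivity theorem of Battle, Harary, Kodama and Youngs), so there is no in-paper argument to compare against. Judged on its own terms, your reconstruction has the right architecture --- induction over the block tree, reduction to a one-point amalgamation $\Gamma = \Gamma_1 \cup \Gamma_2$ with $\Gamma_1 \cap \Gamma_2 = \{w\}$, and a proof of each inequality separately --- and the upper bound $g(\Gamma) \le g(\Gamma_1) + g(\Gamma_2)$ via removing a disc from a face incident with $w$ in each minimal embedding and forming the connected sum is essentially correct and standard.

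The lower bound, however, contains a genuine gap, and it sits exactly where you predicted the difficulty would be. The surgery you describe --- ``cut the surface open along a small circle around $w$ and cap off each side'' --- does not achieve the separation: a small circle around $w$ bounds a disc, and every edge of both $\Gamma_1$ and $\Gamma_2$ incident with $w$ crosses that circle, so cutting along it and capping merely removes and restores a disc; it does not produce surfaces carrying $\Gamma_1$ and $\Gamma_2$ separately. To separate the blocks one needs a closed curve passing \emph{through} $w$ that meets the graph only at $w$ and has all $\Gamma_1$-edges on one side and all $\Gamma_2$-edges on the other; such a curve exists only if the rotation at $w$ is ``unmixed'' (all $\Gamma_1$-edge-ends consecutive), and even then the curve need not be separating in the surface. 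The heart of the Battle--Harary--Kodama--Youngs proof is precisely the lemma that a minimum-genus embedding can be modified to unmix the rotation at $w$ without increasing the genus, after which one argues (via face counts and Lemma~\ref{eulerformulagenus}) that the two pieces account for all the genus. That this step cannot be reduced to Euler-characteristic bookkeeping is shown by the fact that the analogous statement for the \emph{non-orientable} genus is false: non-orientable genus is not additive over blocks, even though the same formal surgery and counting would appear to apply. So your outline identifies the correct strategy but omits the one idea that makes the theorem true; as a citation of a classical result this is harmless, but as a proof it is incomplete.
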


 Throughout the paper, $R$ denotes the finite non-local commutative ring with unity. For basic definitions of ring theory, we refer the reader to \cite{atiyah1994introduction}. We denote $\mathcal{I}^*(R)$ by the set of all non-zero proper ideals of $R$ and $F_i$ by a field. The field of order $m$ is denoted by $\mathbb{F}_m$. The set $U(R)$ denotes all the units of $R$. We write $I^*$ by the set $I \setminus \{0\}$ of $R$ except additive identity.  The \emph{nilpotent index} of an ideal $I$ of $R$ is the smallest positive integer $n$ such that $I^n = 0$. The ideal generated by the elements $a_1, a_2, \ldots, a_k$ $(k \ge 1)$ of the ring $R$ is denoted by $(a_1, a_2, \ldots, a_k)$. A ring $R$ is called \emph{local} if it has a unique maximal ideal. Let $R$ be a non-local  commutative ring. 
 By the structural theorem \cite{atiyah1994introduction},  $R$ is uniquely (up to isomorphism) a finite direct product of  local rings $R_i$ that is, $R \cong R_1 \times R_2 \times \cdots \times R_n$, where $n \geq 2$.  The following results on the planarity and genus $1$ of the graphs under consideration in this paper, will be useful to characterize all the rings whose (reduced) cozero-divisor graphs are of genus $2$. 
 
% \begin{theorem}{\cite[Theorem 6]{jesili2022genus}}
% \label{redcozeroplanarlocalfield}  
%  Let $R \cong R_1 \times \cdots \times R_n \times  F_1 \times \cdots \times F_m$ be a finite commutative ring with identity, where each $R_i$ is a local ring with maximal ideal $\mathcal{M}_i ={0}$ for $1 \le i \le n$, $F_j$ is a field for $1 \le j \le m$ and $m, n \ge 1$. Then $\Gamma_r(R)$ is planar if and only if $R$ satisfies the following conditions:
% \begin{enumerate}[(1)]
%     \item $n = m = 1$;
%     \item  $\mathcal{M}_1 =(a_1)$ is a principal ideal with nilpottent index at most $\eta_1 = 4$ in general. In particular
%      \begin{enumerate}[\rm(i)]
%      \item if $\eta_1  = 2$, then $\mathcal{M}_1 = (a_1)$ is the only nonzero proper principal ideal in $R_1$;
%      \item if $\eta_1  = 3$, then $\mathcal{M}_1$ and $\mathcal{M}_1^2$ are the nonzero principal ideals in $R_1$;
%      \item if $\eta_1  = 4$, then $\mathcal{M}_1$, $\mathcal{M}_1^2$ and $\mathcal{M}_1^3$ are the nonzero principal ideals in $R_1$.   
%      \end{enumerate}
% \end{enumerate}
% \end{theorem}

% \begin{theorem}{\cite[Theorem 7]{jesili2022genus}}\label{redcozerogenusfields}
% Let $R \cong F_1 \times F_2 \times \cdots \times F_n$ be a commutative
% ring with identity where each $F_i$ is a field for $1 \le i \le n$ and
% $n \ge 4$. Then $g(\Gamma_r(R)) \ge 2$.
% \end{theorem}

\begin{theorem}{\cite[Theorem 8]{jesili2022genus}}\label{redcozerogenuslocal}
Let $R \cong R_1 \times R_2 \times \cdots \times R_n$ be a commutative
ring with identity where each $R_i$ is a local ring with maximal ideal
$\mathcal{M}_i \neq 0$ and $ n \ge 2$. Let $\eta_i$ be the nilpotent index of $\mathcal{M}_i$ for $i = 1, 2$. Then $g(\Gamma_r(R))=1$ if and only if $R$ satisfies the following conditions:

\begin{enumerate}[(1)]
    \item $n = 2$;
    \item  $\mathcal{M}_1 =(a_1)$ and  $\mathcal{M}_2 =(b_1)$	 for some $a_1 \in R_1$, $b_1 \in R_2$ and $2 \le \eta_1$, $\eta_2 \le 3$;
     \begin{enumerate}[\rm(i)]
     \item if $\eta_1 = 3$ and $\eta_2 = 2$, then $\mathcal{M}_1$ and $\mathcal{M}_1^2$ are the only non-trivial principal ideals in $R_1$ and $\mathcal{M}_2$ is the only non-trivial principal ideal in $R_2$.
     \item if $\eta_1 = 2$ and $\eta_2 = 3$, then $\mathcal{M}_1$ is the only non-trivial principal ideal in $R_1$ and $\mathcal{M}_2$ and $\mathcal{M}_2^2$ are the only non-trivial principal ideals in $R_2$.
     \end{enumerate}
\end{enumerate}
\end{theorem}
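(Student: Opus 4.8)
The plan is to recast $g(\Gamma_r(R))$ as the genus of an incomparability graph and then bound it from both sides. The first step is the dictionary: for $R\cong R_1\times\cdots\times R_n$ every principal ideal factors as $(a_1)\times\cdots\times(a_n)$, so the vertices of $\Gamma_r(R)$ are precisely the nontrivial elements of the product poset $\mathcal{L}_1\times\cdots\times\mathcal{L}_n$, where $\mathcal{L}_i$ is the poset of principal ideals of $R_i$ ordered by inclusion, and $(a)\sim(b)$ holds if and only if the two products are incomparable. When $\mathcal{M}_i=(a_i)$ is principal the finite local ring $R_i$ is a chain ring, so $\mathcal{L}_i$ is the chain $R_i=(a_i^0)\supsetneq(a_i)\supsetneq\cdots\supsetneq(a_i^{\eta_i})=0$ and $\Gamma_r(R)$ becomes the incomparability graph of a product of chains with its top and bottom deleted. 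I would set up this correspondence first, since it turns every later assertion into a finite check about grid posets and supplies exact vertex/edge counts (for instance $(\eta_1,\eta_2)=(3,2)$ gives $10$ vertices and $18$ edges).

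For the forward implication I would establish four constraints in order, each time exhibiting a genus-$\ge 2$ certificate when a constraint fails. \emph{(a) $n=2$}: for $n\ge 3$, the known planarity classification of \cite{kavitha2017genus} disposes of the genus-$0$ configurations (all factors fields, $n$ small), and in every remaining case I would produce two vertex-disjoint $K_{3,3}$- or $K_5$-minors in the product poset, so Lemma~\ref{genusofblocks} together with Proposition~\ref{genus} forces $g\ge 2$; hence $g=1$ needs $n=2$. \emph{(b) each $\mathcal{M}_i$ principal}: if $\mathcal{M}_i$ requires two generators $s,t$, the extra incomparable principal ideals $(\alpha s+\beta t)$ again yield two disjoint Kuratowski minors and $g\ge 2$. \emph{(c) $2\le\eta_i\le3$}: if say $\eta_1\ge 4$, the five-term chain lets me stack the basic $K_{3,3}$-construction twice along the long axis, giving two disjoint $K_{3,3}$-minors even when $\eta_2=2$. \emph{(d) exclusion of $(2,2)$ and $(3,3)$}: the $(2,2)$ graph is a generalized theta graph---two degree-$4$ hubs joined by four internally disjoint paths---hence planar, whereas the $14$-vertex $(3,3)$ graph contains two disjoint $K_{3,3}$-minors and so has $g\ge 2$. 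What survives is exactly $(\eta_1,\eta_2)\in\{(3,2),(2,3)\}$, which are conditions (i)--(ii).

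For the reverse implication, by the symmetry swapping the two factors it suffices to treat $(\eta_1,\eta_2)=(3,2)$, and I would prove $g=1$ by sandwiching. Lower bound $g\ge 1$: exhibit an explicit $K_{3,3}$-minor whose second part is the ``middle row'' $\{R_1\times\mathcal{M}_2,\ \mathcal{M}_1\times\mathcal{M}_2,\ \mathcal{M}_1^2\times\mathcal{M}_2\}$ and whose first part is built from the two corner ideals $R_1\times 0$ and $0\times R_2$ together with one contracted super-vertex $\{\mathcal{M}_1^2\times R_2,\ \mathcal{M}_1\times 0\}$, with $\mathcal{M}_1\times R_2$ absorbed to bridge $R_1\times 0$ to $R_1\times\mathcal{M}_2$; a direct check verifies all nine incomparabilities, so the graph is nonplanar. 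Upper bound $g\le 1$: give an explicit rotation system realizing a $2$-cell embedding in $\mathbb{S}_1$ and confirm it via Lemma~\ref{eulerformulagenus}, namely $v-e+f=10-18+8=0=2-2g$ with $g=1$ and every face bounded. Combining the two bounds yields $g(\Gamma_r(R))=1$.

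The main obstacle will be parts (a)--(d) of the forward direction---specifically, manufacturing \emph{two vertex-disjoint} Kuratowski minors in the product-of-chains incomparability graphs whenever $n\ge 3$, some $\mathcal{M}_i$ is non-principal, or some $\eta_i$ is too large, while simultaneously matching against the planarity boundary so that no configuration of genus $0$ or $1$ is missed. Deciding, for each borderline small grid, whether the genus is $0$, $1$, or $\ge 2$ is where the genuine combinatorial work lies; by contrast, once the dictionary is fixed, the explicit toroidal embedding in the reverse direction is routine.
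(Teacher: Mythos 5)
First, a point of order: the paper you were given does not prove this statement at all --- it is quoted as Theorem~\ref{redcozerogenuslocal} from the reference [jesili2022genus] and used as a known preliminary. So there is no internal proof to compare against, and I can only assess your proposal on its own terms. Your dictionary (vertices of $\Gamma_r(R_1\times R_2)$ are the nontrivial elements of a product of chains when the $\mathcal{M}_i$ are principal, adjacency is incomparability) is correct, your vertex/edge counts check out, and your converse direction is sound: the $K_{3,3}$-minor you describe for $(\eta_1,\eta_2)=(3,2)$ is valid, and a toroidal embedding certified by Lemma~\ref{eulerformulagenus} finishes it. Your step (a) also works: for $n\ge 3$ with all $\mathcal{M}_i\neq 0$ the graph has at least $25$ vertices and one can indeed exhibit two vertex-disjoint $K_{3,3}$'s.

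The genuine gap is that your universal lower-bound certificate --- two vertex-disjoint Kuratowski minors --- provably does not exist in precisely the tight cases (b), (c), (d) where the theorem's boundary lies. Take case (c) with $\eta_1=4$, $\eta_2=2$: the graph is the incomparability graph of the grid $\{0,\dots,4\}\times\{0,1,2\}$ minus its two corners, only $13$ vertices. A computation of common neighbourhoods shows every $K_{3,3}$ subgraph has one part inside $T=\{(2,0),(3,0),(4,0),(3,1),(4,1)\}$ and the other inside $D=\{(0,1),(1,1),(0,2),(1,2),(2,2)\}$ (for instance, any $K_{3,3}$ through $(2,0)$ is forced to have part exactly $\{(2,0),(3,0),(4,0)\}$); since $|T|=|D|=5$, two disjoint copies are impossible, and a short count --- using that the maximum antichain has size $3$ (so no $K_5$ subgraph, and any $K_5$ minor needs at least $7$ vertices) and that the degree-$2$ vertices $(1,0)$, $(3,2)$ cannot be singleton branch sets --- rules out disjoint minors altogether. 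The same corner obstruction kills your claim in (d) that the $(3,3)$ graph ($14$ vertices) contains two disjoint $K_{3,3}$-minors, and your claim in (b) in its minimal instance $\mathbb{Z}_2[x,y]/(x,y)^2\times\mathbb{Z}_4$ ($13$ vertices). In all three cases the conclusion $g\ge 2$ is true, but it needs different certificates: for $(4,2)$, contract the four edges $\{\mathcal{M}_1^{k+1}\times R_2,\ \mathcal{M}_1^{k}\times 0\}$, $k=0,1,2,3$, and pair the resulting vertices against $\{\mathcal{M}_1^{i}\times\mathcal{M}_2 : 0\le i\le 4\}$ to get a $K_{4,5}$-minor, whence $g\ge g(K_{4,5})=2$ by Proposition~\ref{genus}; for $(3,3)$, use an Euler count on a bipartite subgraph (this is exactly the device the present paper uses in its proof of Theorem~\ref{redcozerogenus2}: a $10$-vertex, $21$-edge bipartite subgraph cannot embed in $\mathbb{S}_1$ since Lemma~\ref{eulerformulagenus} would force $f=11$, contradicting $4f\le 2e$); for the non-principal case (b), the whole-graph Euler bound already gives $g\ge\lceil(48-3\cdot 13+6)/6\rceil=3$. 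So your plan's skeleton is right, but its one load-bearing tool must be replaced, case by case, in exactly the ``borderline small grids'' you flagged as the crux.
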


\begin{theorem}{\cite[Theorem 9]{jesili2022genus}}\label{redcozerogenuslocalfield}
For integers $ n, m \ge 1$, let $R \cong R_1 \times \cdots \times R_n \times  F_1 \times \cdots \times F_m$ be a commutative ring with identity where each $R_i$ $(1 \le i \le n)$ is a local ring with maximal ideal $\mathcal{M}_i \neq \{0\}$ and each $F_j$ $(1 \le j \le m)$ is a field. Then $g(\Gamma_r(R))=1$ if and only if $R$ satisfies one of the following conditions:

\begin{enumerate}[(1)]
    \item$ R \cong R_1 \times F_1 \times F_2$ and $\mathcal{M}_1$ is the only non-trivial principal ideal in $R_1$;
    \item $R \cong R_1 \times F_1$ and
     \begin{enumerate}[\rm(i)]
     \item If $\mathcal{M}_1 = (b_1,b_2)$, then $(b_1)$, $(b_2)$, $(b_1b_2)$ and $(b_1 + b_2)$ are the only nontrivial principal ideals of $R_1$.
     \item $\mathcal{M}_1 = (b_1)$ is a principal ideal in $R_1$ with nilpotency $\eta_1 = 5$ or $6$;
\begin{enumerate}[(a)]
    \item If $\eta_1 = 5$, then $\mathcal{M}_1$, $\mathcal{M}_1^2$, $\mathcal{M}_1^3$ and $\mathcal{M}_1^4$ are the only non-trivial principal ideals of $R_1$.
    \item If $\eta_1 = 6$, then $\mathcal{M}_1$, $\mathcal{M}_1^2$, $\mathcal{M}_1^3$, $\mathcal{M}_1^4$ and $\mathcal{M}_1^5$ are the only non-trivial principal ideals of $R_1$.
\end{enumerate}
\end{enumerate}
\end{enumerate}
\end{theorem}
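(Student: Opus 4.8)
The plan is to prove both implications by reducing everything to the poset of nontrivial principal ideals of $R$. Recall that the principal ideals of a product $R_1 \times \cdots \times R_n \times F_1 \times \cdots \times F_m$ are exactly the componentwise products, that for a field the only principal ideals are $(0)$ and the whole field, and that two vertices of $\Gamma_r(R)$ are adjacent precisely when the corresponding principal ideals are incomparable under inclusion. Thus the entire analysis is combinatorial once this poset is described; in particular, when $\mathcal{M}_i = (b_i)$ is principal the principal ideals of $R_i$ form the chain $R_i \supsetneq \mathcal{M}_i \supsetneq \cdots \supsetneq \mathcal{M}_i^{\eta_i - 1} \supsetneq 0$, giving exactly $\eta_i - 1$ nontrivial ones, while a field contributes no nontrivial principal ideal.

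For the sufficiency direction I would treat the finitely many listed ring types one at a time. In each case $\Gamma_r(R)$ has only about $10$ to $12$ vertices, so I would first exhibit a subdivision of $K_{3,3}$ (for instance, inside $R_1 \times F_1$ with $\mathcal{M}_1 = (b_1)$ and $\eta_1 = 5$ the ideals $(\mathcal{M}_1^i, 0)$ for $i \in \{0,1,2\}$ together with $(\mathcal{M}_1^{i'}, F_1)$ for $i' \in \{3,4,5\}$ already induce $K_{3,3}$, since $(\mathcal{M}_1^i,0)$ and $(\mathcal{M}_1^{i'},F_1)$ are incomparable exactly when $i < i'$), which forces $g(\Gamma_r(R)) \ge 1$ by Proposition \ref{genus}; then I would draw an explicit $2$-cell embedding of the whole graph on the torus, establishing $g(\Gamma_r(R)) \le 1$ and hence equality. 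The embeddings are the only hand-drawn part, but they are routine for graphs of this size.

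For the necessity direction, assume $g(\Gamma_r(R)) = 1$ and argue that the number and type of factors must collapse to the listed forms. The engine is the \emph{one-coordinate-full} construction: writing $R = S \times T$, the ideals $(S, J)$ with $J$ a proper principal ideal of $T$ and $(I, T)$ with $I$ a proper principal ideal of $S$ are always pairwise incomparable across the two families, so they induce a complete bipartite subgraph $K_{p,q}$, where $p,q$ count the respective proper principal ideals. Combining this with the extra vertices supplied by the field coordinates and with the Euler estimate $g \ge \lceil (e - 2v + 4)/4 \rceil$ for triangle-free (bipartite) subgraphs coming from Lemma \ref{eulerformulagenus}, I would show: (a) $n \ge 2$ forces a subgraph of genus $\ge 2$ (already $R_1 \times R_2 \times F_1$ is too large); (b) $m \ge 3$ likewise forces genus $\ge 2$; and (c) in the surviving cases $R_1 \times F_1$ (condition (2)) and $R_1 \times F_1 \times F_2$ (condition (1)), bounding the number of nontrivial principal ideals of $R_1$ pins down its structure, forcing $\mathcal{M}_1$ to be the unique nontrivial principal ideal in the case of condition (1). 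Concretely, for $R_1 \times F_1$ with $\mathcal{M}_1$ principal the graph is the bipartite "staircase" on $2\eta_1$ vertices with $(\mathcal{M}_1^i,0) \sim (\mathcal{M}_1^{i'},F_1)$ iff $i < i'$; choosing, when $\eta_1 = 7$, the dense subgraph on $\{(\mathcal{M}_1^i,0): 0 \le i \le 3\}$ and $\{(\mathcal{M}_1^{i'},F_1): 3 \le i' \le 7\}$ yields $v = 9$, $e = 19$, whence $g \ge \lceil 5/4 \rceil = 2$; this excludes $\eta_1 \ge 7$, while $\eta_1 = 4$ is planar and $\eta_1 \in \{5,6\}$ give genus $1$, matching condition (2)(ii). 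The case $\mathcal{M}_1 = (b_1, b_2)$ of condition (2)(i) requires separately describing the poset of principal ideals generated by $b_1, b_2, b_1b_2, b_1 + b_2$ and bounding the genus of its incomparability graph directly.

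I expect the main obstacle to be the borderline lower bounds in the necessity direction, that is, separating genus $1$ from genus $2$. As the $\eta_1 = 7$ computation shows, the crude Euler bound applied to the entire graph is too weak (it only yields $g \ge 1$), so one must select the correct dense bipartite subgraph to push the estimate to $g \ge 2$; finding such a subgraph in every excluded configuration, and conversely certifying by an explicit drawing that each retained configuration really embeds in the torus, is where the actual work lies. The non-principal case $\mathcal{M}_1 = (b_1, b_2)$ is the most delicate, since there the principal-ideal poset is not a chain and the incomparability graph must be computed by hand before Lemma \ref{genusofblocks} and the Euler estimate can be applied to its blocks.
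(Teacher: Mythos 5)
First, a point of comparison that matters here: this paper does \emph{not} prove the statement at all. It is quoted, with citation, as Theorem 9 of \cite{jesili2022genus} and used purely as a preliminary input to the genus-two classification, so there is no ``paper's own proof'' to measure your attempt against; one can only compare with the method of the cited source and with the closely analogous arguments this paper gives for its own results (Lemma \ref{genusgreaterthan2}, Lemma \ref{genusnotequalto2}, and the proof of Theorem \ref{redcozerogenus2}). Judged on those terms, your strategy is the standard and correct one: identify $\Gamma_r(R)$ with the incomparability graph of the poset of nontrivial principal ideals, use the chain-ring fact that a finite local ring with principal maximal ideal has principal ideals $R_1 \supsetneq \mathcal{M}_1 \supsetneq \cdots \supsetneq \mathcal{M}_1^{\eta_1-1} \supsetneq 0$, get lower bounds from complete bipartite subgraphs and the girth-four Euler estimate (Proposition \ref{genus}, Lemma \ref{eulerformulagenus}), and certify upper bounds by explicit toroidal embeddings. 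Your spot checks are correct: the induced $K_{3,3}$ when $\eta_1=5$; and for $\eta_1 \ge 7$ the bipartite subgraph on $\{\mathcal{M}_1^{i}\times (0): 0\le i\le 3\}$ and $\{\mathcal{M}_1^{i'}\times F_1: 3\le i'\le 7\}$ with $v=9$, $e=19$, forcing $g \ge \lceil 5/4\rceil = 2$. This is consistent with the present paper, whose Theorem \ref{redcozerogenus2} places $\eta_1=7$ at genus exactly two, and your observation that the Euler bound on the whole ``staircase'' graph is too weak, so that a dense subgraph must be extracted, is exactly right.

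That said, as a proof the proposal is an outline whose deferred parts are precisely where the work lies, and you flag them yourself. Two of these deserve emphasis. (a) The non-principal case $\mathcal{M}_1=(b_1,b_2)$ in the \emph{necessity} direction: you must rule out any fifth nontrivial principal ideal, e.g. $(b_1+\alpha b_2)$ for a unit $\alpha \neq 1$. The Euler estimate alone does not dispose of this configuration; the present paper handles the analogous situation (in the proof of Theorem \ref{redcozerogenus2}) by exhibiting a $K_5$ block and then running a face-insertion (embedding-obstruction) argument to push the genus from $1$ up to $3$, and something of that nature — not just edge counting — will be needed. (b) The sufficiency embeddings, including the ten-vertex graph of condition (1) and the incomparability graph of condition (2)(i), must actually be drawn; they are routine but they are the entire content of the ``only if is attained'' half. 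So: right approach, correct verifiable details, but incomplete where the cited theorem is genuinely delicate.
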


\begin{theorem}{\cite[Theorem 2.5]{afkhami2012planar}}\label{cozeroplanarall}
Let R be a finite commutative non-local ring. Then $\Gamma'(R)$ is planar if and only if $R$ is isomorphic to one of the following rings: $\mathbb{Z}_2 \times \mathbb{Z}_2 \times \mathbb{Z}_2 $, $ \mathbb{Z}_2 \times F_2$, $\mathbb{Z}_2 \times\mathbb{Z}_4$, $\mathbb{Z}_2 \times \dfrac{\mathbb{Z}_2[x]}{(x^2)}$, $ \mathbb{Z}_3 \times F_2$, $\mathbb{Z}_3 \times\mathbb{Z}_4$ or $\mathbb{Z}_3 \times \dfrac{\mathbb{Z}_2[x]}{(x^2)}$, where $F_2$ is a finite field.
\end{theorem}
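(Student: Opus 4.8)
The plan is to reduce everything to the containment order on principal ideals. First I would record that for nonzero nonunits $x,y$ one has $x \in Ry$ if and only if $(x) \subseteq (y)$, so that $x \sim y$ in $\Gamma'(R)$ exactly when $(x)$ and $(y)$ are incomparable. Two generators of the same principal ideal are then nonadjacent, while incomparable ideals are joined completely; in other words $\Gamma'(R)$ is the blow-up of the reduced graph $\Gamma_r(R)$ in which each nontrivial principal ideal $I$ is replaced by an independent set of size $|\{x : (x) = I\}|$. Since $((a_1,\dots,a_n)) = (a_1)\times\cdots\times(a_n)$ in $R \cong R_1\times\cdots\times R_n$, containment is tested coordinatewise and every graph below becomes explicitly computable. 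Two consequences I would use repeatedly: choosing one generator per ideal exhibits $\Gamma_r(R)$ as an induced subgraph of $\Gamma'(R)$ (so $\Gamma'(R)$ planar forces $\Gamma_r(R)$ planar), and for any ring $S$ the map $x \mapsto (x,1_S)$ realises $\Gamma'(R)$ as an induced subgraph of $\Gamma'(R\times S)$, because a unit coordinate leaves all containments unchanged.

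For sufficiency I would compute $\Gamma'(R)$ directly for each of the seven listed rings. For $\mathbb{Z}_2\times F$ and $\mathbb{Z}_3\times F$ the nonzero nonunits all lie in one coordinate, giving the star $K_{1,|F|-1}$ and the complete bipartite graph $K_{2,|F|-1}$, both planar by Proposition \ref{genus}(ii); the five remaining rings $\mathbb{Z}_2\times\mathbb{Z}_2\times\mathbb{Z}_2$, $\mathbb{Z}_2\times\mathbb{Z}_4$, $\mathbb{Z}_2\times\mathbb{Z}_2[x]/(x^2)$, $\mathbb{Z}_3\times\mathbb{Z}_4$ and $\mathbb{Z}_3\times\mathbb{Z}_2[x]/(x^2)$ each have at most a dozen vertices, and I would simply display planar drawings.

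Necessity is the substance. Besides the append principle above I would use a field-shrinking principle: for prime powers $q'\le q$, restricting the field coordinate to $0$ together with any $q'-1$ of its nonzero elements (all of which generate the whole field) realises $\Gamma'(R\times\mathbb{F}_{q'})$ as an induced subgraph of $\Gamma'(R\times\mathbb{F}_q)$. Together these collapse the infinitude of rings to a short list of threshold cases plus one genuinely infinite family, and I would argue by the number of factors and the field/non-field dichotomy. When both factors are fields, $\Gamma'(F_1\times F_2)=K_{|F_1|-1,|F_2|-1}$ is non-planar once both orders are $\ge 4$, directly by Proposition \ref{genus}(ii). For three or more factors the decisive seeds are $\mathbb{Z}_2^4$, $\mathbb{Z}_2\times\mathbb{Z}_2\times\mathbb{Z}_3$ and $\mathbb{Z}_2\times\mathbb{Z}_2\times L$ with $L\in\{\mathbb{Z}_4,\mathbb{Z}_2[x]/(x^2)\}$, every larger ring containing one of them after appending and shrinking. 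For two factors with a non-field factor the thresholds are $\mathbb{F}_4\times\mathbb{Z}_4$ and the three order-$16$ products of two non-fields, while the products $\mathbb{Z}_2\times S$ and $\mathbb{Z}_3\times S$ with $S$ a non-field local ring of order $\ge 8$ form the infinite family. Non-planarity of each case I would certify either by the Euler bound $e\le 3v-6$, sharpened to $e\le 2v-4$ when the blow-up is triangle-free (both coming from Lemma \ref{eulerformulagenus} at $g=0$), or by an explicit subdivision of $K_{3,3}$ or $K_5$ via Kuratowski's criterion.

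The main obstacle will be precisely the cases where the crude edge count is inconclusive and a forbidden subgraph must be built by hand. For example $\Gamma'(\mathbb{Z}_2\times\mathbb{Z}_2\times\mathbb{Z}_3)$ has $9$ vertices and $19$ edges, comfortably below $3v-6=21$, yet is non-planar because, after tracking the blow-up multiplicities of its six principal ideals, one locates a $K_{3,3}$ subdivision. Compounding this is the tightness of the planar/non-planar boundary: $\mathbb{Z}_3\times\mathbb{Z}_4$ is planar whereas $\mathbb{F}_4\times\mathbb{Z}_4$ and $\mathbb{Z}_2\times\mathbb{Z}_8$ are not, and the two order-$4$ local rings $\mathbb{Z}_4$ and $\mathbb{Z}_2[x]/(x^2)$ share an ideal lattice but differ in element counts, so each must be checked separately. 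The real work is thus twofold: making the monotonicity reductions airtight—and in particular controlling the infinite family $\mathbb{Z}_2\times S$, $\mathbb{Z}_3\times S$ by a uniform density estimate on $\Gamma'$—so that only finitely many rings require computation, and then producing, for each of those, the explicit planar embedding or the explicit forbidden subdivision.
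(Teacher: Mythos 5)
This statement is not proved in the paper at all: it is Theorem~2.5 of Afkhami--Khashyarmanesh \cite{afkhami2012planar}, imported verbatim as a known ingredient for the genus-two classification, so there is no in-paper argument to compare yours against. Judged on its own, your outline is sound and is essentially the standard route to such a classification. The reduction of adjacency to incomparability of principal ideals, the blow-up description of $\Gamma'(R)$ over $\Gamma_r(R)$, and the two monotonicity principles (appending a factor via $x\mapsto(x,1_S)$, and shrinking a coordinate by restricting to $\{0\}\cup U$ or to a subchain $0\subset(m)\subset R_i$) are all correct and do collapse the problem to finitely many seed rings plus the family $F\times S$ with $|F|\le 3$. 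Your computations for the planar cases ($K_{1,|F|-1}$, $K_{2,|F|-1}$, and the small explicit drawings) check out. One spot where you are vaguer than necessary: the ``infinite family'' $\mathbb{Z}_2\times S$, $\mathbb{Z}_3\times S$ with $S$ local, not a field, $|S|\ge 8$ does not really need a density estimate --- since $|S|\ge 8$ forces $|\mathcal{M}^*|\ge 2$ and $|U(S)|\ge 4$, the vertices $(0,u_1),(0,u_2),(0,u_3)$ against $(1,0),(1,m_1),(1,m_2)$ give a $K_{3,3}$ outright (and, as a fallback, this $\Gamma'$ is bipartite, so the $e\le 2v-4$ bound you mention also applies). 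Similarly $\mathbb{F}_4\times\mathbb{Z}_4$ and the products of two non-fields each contain an explicit $K_{3,3}$ on units-times-zero versus zero-times-units. For $\mathbb{Z}_2\times\mathbb{Z}_2\times\mathbb{Z}_3$ the forbidden subgraph I can verify is a $K_5$ subdivision (branch vertices of supports $\{1,3\},\{1,3\},\{2,3\},\{2,3\},\{1,2\}$, with the two missing intra-class edges routed through $(0,1,0)$ and $(1,0,0)$) rather than the $K_{3,3}$ subdivision you assert, but either certificate settles non-planarity. In short: the proposal is a viable proof strategy with no essential gap; it simply cannot be matched against the paper, which cites the result without proof.
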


\begin{theorem}{\cite[Theorem 2.1]{kavitha2017genus}}\label{cozerogenusfields}
Let $R \cong F_1 \times \cdots \times F_n$ be a finite commutative ring with identity, where each $F_j$ is a field and $ n \ge 2$.
Then $g(\Gamma'(R)) = 1$ if and only if $R$ is isomorphic to one of the following rings: $\mathbb{F}_4 \times \mathbb{F}_4$, $\mathbb{F}_4 \times \mathbb{Z}_5$, $\mathbb{Z}_5 \times \mathbb{Z}_5$, $\mathbb{F}_4 \times \mathbb{Z}_7$ or $\mathbb{Z}_3 \times \mathbb{Z}_2 \times \mathbb{Z}_2$.
\end{theorem}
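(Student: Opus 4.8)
The plan is to translate the problem into pure combinatorics through the notion of support. For $x=(x_1,\dots,x_n)\in R=F_1\times\cdots\times F_n$ put $\mathrm{supp}(x)=\{i: x_i\neq 0\}$. Since each $F_i$ is a field, $Rx=\{z: \mathrm{supp}(z)\subseteq \mathrm{supp}(x)\}$, so $x\in Ry$ holds precisely when $\mathrm{supp}(x)\subseteq\mathrm{supp}(y)$. Consequently two vertices $x,y$ are adjacent in $\Gamma'(R)$ if and only if their supports are incomparable, and vertices sharing a common support form an independent set. I would record this as a preliminary observation: $\Gamma'(R)$ is the ``blow-up'' of the incomparability graph on the proper nonempty subsets $S\subsetneq\{1,\dots,n\}$ (two such subsets joined iff incomparable), where each $S$ is replaced by an independent set of $\prod_{i\in S}(|F_i|-1)$ vertices and two blocks are completely joined exactly when the subsets are incomparable. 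Every subsequent computation is governed by this description.

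Next I would dispose of $n=2$. The only proper nonempty subsets are $\{1\}$ and $\{2\}$, which are incomparable, so $\Gamma'(R)\cong K_{|F_1|-1,\,|F_2|-1}$. By Proposition~\ref{genus}(ii), $g(\Gamma'(R))=\lceil (|F_1|-3)(|F_2|-3)/4\rceil$, and the graph is planar when $\min\{|F_1|,|F_2|\}\le 3$. Imposing $g=1$ forces $|F_1|,|F_2|\ge 4$ and $1\le (|F_1|-3)(|F_2|-3)\le 4$; running through the prime-power orders (and using that there is no field of order $6$, which kills the value $3$) yields exactly $\mathbb{F}_4\times\mathbb{F}_4$, $\mathbb{F}_4\times\mathbb{Z}_5$, $\mathbb{Z}_5\times\mathbb{Z}_5$, and $\mathbb{F}_4\times\mathbb{Z}_7$.

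For $n\ge 3$ I would invoke a monotonicity principle: if $R'$ arises from $R$ by enlarging one field or by appending one more field factor, then $\Gamma'(R)$ is an \emph{induced} subgraph of $\Gamma'(R')$ (for an appended factor, take $\mathbb{Z}_2$ and restrict to the vertices whose new coordinate is $0$), whence $g(\Gamma'(R))\le g(\Gamma'(R'))$. So it suffices to pin the genus down on a few minimal rings. The base ring $\mathbb{Z}_2\times\mathbb{Z}_2\times\mathbb{Z}_2$ gives the triangular prism, which is planar (consistent with Theorem~\ref{cozeroplanarall}). I would then show $g(\Gamma'(\mathbb{Z}_3\times\mathbb{Z}_2\times\mathbb{Z}_2))=1$: it is nonplanar because it does not appear in the planar list of Theorem~\ref{cozeroplanarall}, so $g\ge 1$, and I would produce an explicit toroidal embedding of this $9$-vertex, $19$-edge graph to get $g\le 1$. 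Every larger three-factor ring dominates one of the two minimal successors $\mathbb{Z}_3\times\mathbb{Z}_3\times\mathbb{Z}_2$ or $\mathbb{F}_4\times\mathbb{Z}_2\times\mathbb{Z}_2$, and every ring with $n\ge 4$ dominates $\mathbb{Z}_2\times\mathbb{Z}_2\times\mathbb{Z}_2\times\mathbb{Z}_2$; so by monotonicity it is enough to show these three rings have genus $\ge 2$.

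The genus-$\ge 2$ lower bounds are where the real work lies, and they split by difficulty. For $\mathbb{Z}_2^{4}$ (here $v=14$, $e=55$) and for $\mathbb{Z}_3\times\mathbb{Z}_3\times\mathbb{Z}_2$ (here $v=13$, $e=40$), the abundance of triangles lets me apply the Euler-formula estimate $e\le 3v-6+6g$ from Lemma~\ref{eulerformulagenus}, which forces $g\ge 4$ and $g\ge 2$ respectively. The delicate case is $\mathbb{F}_4\times\mathbb{Z}_2\times\mathbb{Z}_2$, where $v=12$, $e=31$, and the crude Euler bound yields only $g\ge 1$; moreover one checks that the largest complete bipartite \emph{subgraph} is $K_{4,4}$, so no single forbidden complete bipartite subgraph suffices. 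I expect this to be the main obstacle, and I would resolve it by passing to a \emph{minor}: contracting one edge joining a vertex of the $\{1\}$-block to the vertex of the $\{2,3\}$-block merges them into a single branch set that, together with the $\{1,2\}$- and $\{1,3\}$-blocks and the two relevant singleton vertices, realizes a $K_{4,5}$ minor. Since genus is minor-monotone and $g(K_{4,5})=\lceil(4-2)(5-2)/4\rceil=2$ by Proposition~\ref{genus}(ii), we get $g\ge 2$. With these three rings excluded, the theorem follows by combining the $n=2$ list with $\mathbb{Z}_3\times\mathbb{Z}_2\times\mathbb{Z}_2$.
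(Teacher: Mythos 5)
The paper does not prove this statement at all: it is imported verbatim as \cite[Theorem 2.1]{kavitha2017genus} and used as a black box (e.g.\ inside the proof of Lemma~\ref{genusofR1R2R3}), so there is no internal proof to compare against. Judged on its own, your argument is essentially correct and is a clean, self-contained route. The support/blow-up description of $\Gamma'(F_1\times\cdots\times F_n)$ is right ($Rx$ is exactly the set of elements with support contained in $\mathrm{supp}(x)$, so adjacency is incomparability of supports and same-support vertices are independent), the $n=2$ case reduces correctly to $K_{|F_1|-1,|F_2|-1}$ and the enumeration via $1\le(|F_1|-3)(|F_2|-3)\le 4$ is exhaustive, and the monotonicity principle (induced subgraph under enlarging a factor or appending a $\mathbb{Z}_2$, restricting to vertices with last coordinate $0$) is valid. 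I checked your vertex/edge counts ($v=14,e=55$ for $\mathbb{Z}_2^4$; $v=13,e=40$ for $\mathbb{Z}_3\times\mathbb{Z}_3\times\mathbb{Z}_2$; $v=12,e=31$ for $\mathbb{F}_4\times\mathbb{Z}_2\times\mathbb{Z}_2$) and the Euler bounds $e\le 3v-6+6g$ do give $g\ge 4$ and $g\ge 2$ in the first two cases. Your $K_{4,5}$ minor in $\mathbb{F}_4\times\mathbb{Z}_2\times\mathbb{Z}_2$ also checks out concretely: with $4$-side $\{(1,1,0)\},\{(\alpha,1,0)\},\{(\beta,1,0)\},\{(0,1,0)\}$ and $5$-side $\{(1,0,1)\},\{(\alpha,0,1)\},\{(\beta,0,1)\},\{(0,0,1)\},\{(\alpha,0,0),(0,1,1)\}$, all $20$ cross adjacencies are present and the merged branch set is connected, so $g\ge g(K_{4,5})=2$. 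This is arguably tidier than the face-tracing arguments the present paper uses for the analogous rings in Lemma~\ref{genusofR1R2R3}.

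The one genuine gap is the promissory note for $g(\Gamma'(\mathbb{Z}_3\times\mathbb{Z}_2\times\mathbb{Z}_2))\le 1$: nonplanarity follows from Theorem~\ref{cozeroplanarall}, but the upper bound requires an explicit embedding of this $9$-vertex, $19$-edge graph in $\mathbb{S}_1$, which you assert you ``would produce'' but do not exhibit. Since this ring is the unique member of the final list with $n\ge 3$, the theorem is not actually proved until that embedding (or some other genus-$1$ certificate) is written down; everything else in the proposal is complete.
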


\begin{theorem}{\cite[Theorem 2.3]{kavitha2017genus}}\label{cozerogenuslocalfield}
Let $R = R_1 \times \cdots \times R_n \times F_1 \times \cdots \times F_m$ be a commutative ring with identity, where each $R_i$ is a local ring with maximal ideal $\mathcal{M}_i \neq \{0 \}$, $F_j$ is a field and $n, m \ge 1$. Then $g(\Gamma'(R)) = 1$ if and only if $R$ is isomorphic to one of the following rings: 
$\mathbb{Z}_4 \times  \mathbb{F}_4$, $\dfrac{\mathbb{Z}_2[x]}{(x^2)} \times  \mathbb{F}_4 $, $\mathbb{Z}_4 \times  \mathbb{Z}_5$, $\dfrac{\mathbb{Z}_2[x]}{(x^2)} \times \mathbb{Z}_5$, $\mathbb{Z}_4 \times  \mathbb{Z}_7$, $\dfrac{\mathbb{Z}_2[x]}{(x^2)} \times \mathbb{Z}_7$, $\mathbb{Z}_9 \times  \mathbb{Z}_2$, $\dfrac{\mathbb{Z}_3[x]}{(x^2)} \times \mathbb{Z}_2$, $\mathbb{Z}_8 \times  \mathbb{Z}_2$, $\dfrac{\mathbb{Z}_2[x]}{(x^3)} \times \mathbb{Z}_2$, $\dfrac{\mathbb{Z}_4[x]}{(2x, x^2-2)} \times \mathbb{Z}_2$, $\dfrac{\mathbb{Z}_4[x]}{(2,x)^2} \times \mathbb{Z}_2$, or $\dfrac{\mathbb{Z}_2[x,y]}{(x,y)^2} \times \mathbb{Z}_2$.
\end{theorem}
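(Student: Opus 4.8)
The plan is to reduce to the case $R \cong R_1 \times F_1$ of a single local ring times a single field, and then to pin down the finitely many admissible pairs $(R_1,F_1)$ by squeezing the genus between a lower bound coming from a large complete bipartite (or complete) subgraph and an upper bound coming from an explicit toroidal drawing. Throughout I would use that the genus is monotone under passing to subgraphs, that $g(K_n)$ and $g(K_{m,n})$ are computed in Proposition \ref{genus}, and that by Lemma \ref{genusofblocks} the genus is additive over the blocks of a graph; the last fact lets me certify $g \ge 2$ as soon as I can display two vertex-disjoint non-planar subgraphs.

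First I would record the adjacency in a product $\Gamma'(A \times B)$: writing $x \preceq y$ for $(x) \subseteq (y)$, two distinct vertices $(a,c)$ and $(a',c')$ are non-adjacent exactly when $a \preceq a'$ and $c \preceq c'$, or $a' \preceq a$ and $c' \preceq c$. Specialising to $R_1 \times F_1$ and splitting the vertices into the three natural classes $\mathbf{C}=\{(u,0): u \in U(R_1)\}$, $\mathbf{A}=\{(m,0): m \in \mathcal{M}_1^*\}$ and $\mathbf{B}=\{(m,c): m \in \mathcal{M}_1,\ c \in F_1^*\}$, a short computation shows that every vertex of $\mathbf{C}$ is joined to every vertex of $\mathbf{B}$ and to no vertex of $\mathbf{A}\cup\mathbf{C}$, whereas adjacency inside $\mathbf{A}\cup\mathbf{B}$ is determined by the containment relations among the $R_1$-coordinates. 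In particular the whole of $\mathbf{A}\cup\mathbf{C}$ is completely joined to $\{(0,c): c \in F_1^*\}$, so $\Gamma'(R_1\times F_1)$ contains $K_{\,|\mathbf{A}|+|\mathbf{C}|,\,|F_1|-1}$ as a subgraph; these complete bipartite pieces (and, when $\mathcal{M}_1$ is richer, complete pieces) are what drive the genus.

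Using this I would force the factors to be small. If $n+m \ge 3$, or if $F_1$ (respectively $R_1$) is too large, the description above yields a subgraph of the form $K_{3,7}$, $K_{4,5}$ or $K_8$, or two vertex-disjoint non-planar graphs; by Proposition \ref{genus} and Lemma \ref{genusofblocks} each of these has genus at least $2$ and the ring is discarded. For example, when $R_1 \cong \mathbb{Z}_4$ the set $\mathbf{A}\cup\mathbf{C}$ has three elements, all adjacent to the $|F_1|-1$ vertices $(0,c)$, so $K_{3,|F_1|-1}$ embeds and $g \ge 2$ once $|F_1| \ge 8$ --- which is precisely why $\mathbb{Z}_4 \times \mathbb{F}_8$ fails to qualify. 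Carrying out the analogous estimates for each possible value of $|\mathcal{M}_1|$ and each nilpotency $\eta_1$ leaves only finitely many candidates; by their orders these are a local ring of order $4$, $8$ or $9$ against a field of order $2$, $4$, $5$ or $7$, and the finer constraints on $\mathcal{M}_1$ (principal versus non-principal, and $\eta_1 \le 3$) are what distinguish the thirteen listed rings from their immediate neighbours.

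For each surviving candidate I would finish in two steps: produce an explicit $K_{3,3}$ or $K_5$ subgraph to get $g \ge 1$ (non-planarity), and draw an explicit embedding on the torus, validated against Euler's formula (Lemma \ref{eulerformulagenus}), to get $g \le 1$; together these give $g = 1$. I expect the real obstacle to lie in this constructive half: exhibiting and checking the thirteen toroidal embeddings, together with the witnessing genus-$2$ subgraphs in the borderline discards, is where the bookkeeping is heaviest, most of all for the three local rings of order $8$ with non-principal maximal ideal, whose cozero-divisor graphs are the densest and least symmetric in the list.
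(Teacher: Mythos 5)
This statement is not proved in the paper at all: it is Theorem~2.3 of Kavitha and Kala, imported verbatim in the preliminaries (the present paper only uses it, in the proof of Theorem~\ref{genusofR1R2}), so there is no in-paper proof to measure your attempt against. Judged on its own terms, your outline follows the standard methodology of that literature and of this paper's own genus arguments: complete bipartite subgraphs for lower bounds, explicit toroidal embeddings checked against Lemma~\ref{eulerformulagenus} for upper bounds. Your adjacency analysis of $\Gamma'(R_1\times F_1)$ via the classes $\mathbf{A},\mathbf{B},\mathbf{C}$ is correct, the $K_{|R_1|-1,\,|F_1|-1}$ subgraph is real, and the $\mathbb{Z}_4\times\mathbb{F}_8$ computation ($K_{3,7}$, genus $2$) is right. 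Two corrections, though. First, the sieve you describe in the last step is miscalibrated: the thirteen rings are cut out purely by the pair of orders $(|R_1|,|F_1|)$ --- every local non-field ring of order $4$ appears against every field of order $4,5,7$, and every local non-field ring of order $8$ or $9$ (all five, resp.\ both, of them, principal maximal ideal or not) appears against $\mathbb{F}_2$ --- so no ``finer constraint on $\mathcal{M}_1$ (principal versus non-principal, $\eta_1\le 3$)'' separates members from non-members here; invoking such constraints suggests the list has a structure it does not have. Second, Lemma~\ref{genusofblocks} concerns the blocks of a connected graph and does not by itself give $g(\Gamma)\ge g(H_1)+g(H_2)$ for vertex-disjoint subgraphs $H_1,H_2$; you want additivity of genus over connected components (Battle--Harary--Kodama--Youngs) applied to the subgraph $H_1\sqcup H_2$, together with monotonicity. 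The conclusion you draw from it is still valid.

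The substantive gap is the one you name yourself: everything that actually decides the theorem is deferred. The case analysis over $(|\mathcal{M}_1|,\eta_1,|F_1|)$ and over $n+m\ge 3$ is only sketched (and some discards, e.g.\ $R_1\times\mathbb{Z}_2\times\mathbb{Z}_2$ with $|R_1|=4$ or $\mathbb{Z}_4\times\mathbb{Z}_7$ versus $\mathbb{Z}_4\times\mathbb{F}_8$, sit right at the Euler-formula threshold and need a genuinely exhibited subgraph or embedding rather than an order count), and none of the thirteen toroidal embeddings is produced. As a proof plan it is sound and faithful to how this result is actually established; as a proof it is incomplete.
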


\begin{theorem}{\cite[Theorem 3.1]{kavitha2017genus}}\label{redcozeroplanarfields}
Let $R \cong F_1 \times \cdots \times F_n$ be a finite commutative ring with identity, where each $F_j$ is a field and $ n \ge 2$. Then $\Gamma_r(R)$ is planar if and only if R is isomorphic to one of the following rings: $F_1 \times F_2 \times F_3$ or $F_1 \times F_2$.
\end{theorem}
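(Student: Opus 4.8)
The plan is to first reduce the problem to a purely combinatorial one. Since $R \cong F_1 \times \cdots \times F_n$ is a finite product of fields, every ideal of $R$ is generated by an idempotent and hence is principal; concretely, the ideals are exactly the sets $I_S = \prod_{i} J_i$, where $[n] = \{1, \ldots, n\}$, $J_i = F_i$ for $i \in S$ and $J_i = \{0\}$ for $i \notin S$, as $S$ ranges over the subsets of $[n]$. The nontrivial principal ideals correspond to the subsets $S$ with $\emptyset \neq S \subsetneq [n]$, and $I_S \subset I_T$ holds precisely when $S \subsetneq T$. Hence two distinct vertices $I_S$ and $I_T$ are adjacent in $\Gamma_r(R)$ if and only if $S$ and $T$ are incomparable under inclusion. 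In other words, $\Gamma_r(R)$ is isomorphic to the incomparability graph of the poset of nonempty proper subsets of $[n]$ ordered by inclusion. This identification is the first key step, and after it the field structure plays no further role: only $n$ matters.

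Next I would settle the two affirmative cases by exhibiting explicit planar drawings. For $n = 2$ the only vertices are $\{1\}$ and $\{2\}$, which are incomparable, so $\Gamma_r(R) \cong K_2$, which is trivially planar. For $n = 3$ there are $2^3 - 2 = 6$ vertices, namely the three singletons and the three doubletons. The three singletons are pairwise incomparable and so induce a triangle, the three doubletons likewise induce a triangle, and a singleton $\{i\}$ is incomparable with a doubleton $D$ exactly when $i \notin D$, giving the matching $\{i\} \sim [3] \setminus \{i\}$. Thus $\Gamma_r(R)$ is the triangular prism (two triangles joined by a perfect matching), which I would draw directly in the plane to conclude planarity.

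The remaining, and genuinely decisive, step is to prove non-planarity for all $n \ge 4$, which forces $R$ to be one of the two listed forms. The clean obstruction is to observe that any two distinct subsets of $[n]$ of the same cardinality are incomparable, so every fixed ``level'' of the subset lattice induces a clique in $\Gamma_r(R)$. In particular the $\binom{n}{2}$ two-element subsets are pairwise adjacent and induce $K_{\binom{n}{2}}$; since $\binom{n}{2} \ge 6$ for $n \ge 4$, this contains $K_5$ as a subgraph, and $g(K_5) = 1 > 0$ by Proposition \ref{genus}, so $\Gamma_r(R)$ is non-planar. (For $n \ge 5$ one may alternatively use the singletons, which already induce $K_n \supseteq K_5$.) Consequently planarity can occur only for $n \in \{2, 3\}$, which matches the stated classification and completes both directions of the equivalence.

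The main obstacle I anticipate is not any single hard computation but making sure the combinatorial translation in the first step is airtight---in particular confirming that in a finite product of fields every ideal really is principal and that the adjacency relation coincides exactly with incomparability of the associated subsets. Once the graph is recognized as this incomparability graph on subsets, both the explicit prism embedding for $n = 3$ and the $K_5$-containment argument for $n \ge 4$ are routine.
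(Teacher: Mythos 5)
Your argument is correct and complete. Note that the paper does not prove this statement at all --- it is quoted as \cite[Theorem 3.1]{kavitha2017genus} and used as a black box --- so there is no in-paper proof to compare against; your write-up is a valid self-contained derivation. The key identification (every ideal of $F_1\times\cdots\times F_n$ is generated by an idempotent $e_S$, so $\Gamma_r(R)$ is the incomparability graph of the nonempty proper subsets of $[n]$) is sound, the $n=2$ and $n=3$ cases do give $K_2$ and the triangular prism respectively, and for $n\ge 4$ the $\binom{n}{2}\ge 6$ two-element subsets form a clique containing $K_5$, which settles non-planarity. The only cosmetic remark: for $n=4$ you must use the level of $2$-subsets (the singletons only give $K_4$), which you already handle correctly by working with $\binom{n}{2}$ rather than $n$.
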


\begin{theorem}{\cite[Theorem 3.2]{kavitha2017genus}}\label{redcozeroplanarlocal}
Let $R \cong R_1 \times \cdots \times R_n$ be a commutative ring with identity $1$,  where each $R_i$ is a local ring with maximal ideal $\mathcal{M}_i \neq \{0 \}$ and $n \ge 2$. Then $\Gamma_r(R)$ is planar if and only if $R \cong R_1 \times R_2$ such that $\mathcal{M}_i$ is the only non-zero principal ideal in $R_i$.
\end{theorem}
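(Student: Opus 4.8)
The plan is to translate everything into poset language at the outset. A principal ideal of a product $R_1\times\cdots\times R_n$ is exactly a product $(a_1)\times\cdots\times(a_n)$ of principal ideals of the factors, and $(a_1)\times\cdots\times(a_n)\subseteq(b_1)\times\cdots\times(b_n)$ if and only if $(a_i)\subseteq(b_i)$ for every $i$. Hence the vertices of $\Gamma_r(R)$ are the tuples of principal ideals (one per factor) other than the all-zero and all-$R_i$ tuples, and two vertices are adjacent precisely when they are incomparable in the coordinatewise order. In other words, $\Gamma_r(R)$ is the incomparability graph of the product of the principal-ideal posets of the factors, with its top and bottom elements deleted. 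I would carry out both directions in this picture, repeatedly using that an induced subgraph of a planar graph is planar and that a graph is nonplanar as soon as it contains a $K_5$ or $K_{3,3}$ minor.

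For sufficiency, suppose $R\cong R_1\times R_2$ with $\mathcal{M}_i$ the only nontrivial principal ideal of $R_i$. Then the principal ideals of each $R_i$ are exactly $0\subsetneq\mathcal{M}_i\subsetneq R_i$, so $\Gamma_r(R)$ is the incomparability graph of $C_3\times C_3$ with top and bottom removed, a graph on the remaining $7$ tuples and $9$ edges. I would verify directly that this graph is exactly the two vertices $0\times R_2$ and $R_1\times 0$ joined by four internally disjoint paths (a generalized theta graph), and then exhibit a planar embedding by drawing these four paths as nested arcs between the two hubs.

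For necessity I would show $\Gamma_r(R)$ is nonplanar whenever the stated condition fails. If $n\ge 3$, then since each $\mathcal{M}_i\neq 0$ I may choose a nontrivial principal ideal $0\subsetneq P_i\subsetneq R_i$ in each factor; setting all further coordinates to $0$, the five tuples $(P_1,P_2,P_3),(0,P_2,R_3),(0,R_2,P_3),(R_1,0,P_3),(R_1,P_2,0)$ all have ``rank three'' in the chains $0<P_i<R_i$, hence are pairwise incomparable and induce a $K_5$ (indeed the whole rank-three level induces a $K_7$). If $n=2$ but some factor, say $R_1$, fails to have $\mathcal{M}_1$ as its unique nontrivial principal ideal, then $R_1$ has at least two distinct nontrivial principal ideals $A\neq B$ (an easy argument shows that having exactly one forces it to be $\mathcal{M}_1$ with $\mathcal{M}_1^2=0$). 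Picking a nontrivial principal ideal $C$ of $R_2$, I would pass to the induced subgraph on the ten product ideals arising from $\{0,A,B,R_1\}\times\{0,C,R_2\}$, whose adjacencies depend only on whether $A,B$ are comparable: when $A,B$ are comparable it is the incomparability graph of $C_4\times C_3$ (minus top and bottom), in which I would display an explicit $K_{3,3}$-subdivision, and when $A,B$ are incomparable it is the incomparability graph of (diamond)$\,\times\,C_3$, a $10$-vertex $24$-edge graph in which I would display an explicit $K_5$-minor.

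The main obstacle is precisely this $n=2$ analysis: one must show that a single extra principal ideal already destroys planarity. Because the two factor-posets interact, the relevant $10$-vertex graphs sit essentially at the planarity threshold (the incomparable subcase has $e=3v-6$ exactly and still contains triangles), so a crude Euler-formula edge count is inconclusive and the Kuratowski configurations genuinely have to be produced by hand in both the comparable and the incomparable subcases. Once these two explicit subgraphs are in hand, induced-subgraph monotonicity finishes the argument.
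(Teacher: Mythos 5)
The paper does not actually prove this statement: it is imported verbatim from Kavitha and Kala \cite{kavitha2017genus} as a preliminary result, so there is no in-house proof to compare against. Judged on its own, your proposal is correct, and every configuration you defer to ``would display'' does exist. The reformulation of $\Gamma_r(R_1\times\cdots\times R_n)$ as the incomparability graph of the product of the principal-ideal posets minus its extremes is valid; the sufficiency graph is indeed the $7$-vertex, $9$-edge generalized theta graph on hubs $0\times R_2$ and $R_1\times 0$ with internally disjoint paths of lengths $1,2,3,3$; the rank-sum argument for $n\ge 3$ correctly produces a $K_5$ (distinct elements of equal rank in a product of chains are incomparable); and your reduction of the $n=2$ case to ``some factor has two distinct nontrivial principal ideals $A\neq B$'' is sound, since a local ring with $\mathcal{M}\neq 0$ and a unique nontrivial principal ideal must have that ideal equal to $\mathcal{M}$. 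In the comparable subcase a $K_{3,3}$-subdivision does exist: with $0\subsetneq A\subsetneq B\subsetneq R_1$ and $0\subsetneq C\subsetneq R_2$, take parts $\{0\times R_2,\ R_1\times 0,\ A\times R_2\}$ and $\{B\times 0,\ B\times C,\ R_1\times C\}$; seven of the nine required connections are edges, and the remaining two are the paths $R_1\times 0 \sim 0\times C \sim B\times 0$ and $R_1\times 0 \sim B\times R_2 \sim R_1\times C$. The one place you over-engineer is the incomparable subcase: there is no need to hunt for a $K_5$-minor in the $10$-vertex, $24$-edge graph, because the two fibers $\{A\}\times\{0,C,R_2\}$ and $\{B\}\times\{0,C,R_2\}$ are already the parts of a $K_{3,3}$ \emph{subgraph} (every $A$-vertex is incomparable to every $B$-vertex since $A\not\subseteq B$ and $B\not\subseteq A$), which kills planarity instantly; the $K_5$-minor you promise does exist, but it is avoidable work. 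With that simplification your argument is complete and, being purely order-theoretic, arguably cleaner than the ring-by-ring case analysis typical of the cited source.
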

 
%%%%%%%%%%%%%%%%%%%%%%%%%%%%%%%%%%%%%%%%%%%%%%%%%%%%%%%%%%%%%%%%%%%%%%%%%%%%%%%%%%%%%%%%%%%%%%%%%%%%%%%%%%%%%%%%%%%%%%%%%
\section{Proof of the main theorems}

In order to prove the Theorem \ref{redcozerogenus2}, first we establish the following lemmas.

\begin{lemma}\label{genusgreaterthan2}
Let $R \cong F_1 \times F_2 \times F_3 \times F_4$. Then $g(\Gamma_r(R)) \geq 3$.   
\end{lemma}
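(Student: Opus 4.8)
The plan is to identify $\Gamma_r(R)$ explicitly as a concrete graph on $14$ vertices and then extract the genus bound from Euler's formula (Lemma \ref{eulerformulagenus}). First I would pin down the vertex set. Since each $F_i$ is a field, its only ideals are $\{0\}$ and $F_i$, so every ideal of $R = F_1 \times F_2 \times F_3 \times F_4$ is a product $I_1 \times I_2 \times I_3 \times I_4$ with $I_j \in \{0, F_j\}$, and each such ideal is principal (generated by the element that is $1$ in the chosen coordinates and $0$ elsewhere). Discarding the zero ideal and $R$ itself leaves exactly $2^4 - 2 = 14$ nontrivial principal ideals, which I identify with the nonempty proper subsets $S \subsetneq \{1,2,3,4\}$ via their support. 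Under this identification, $(a) \subset (b)$ becomes strict set inclusion, so two distinct vertices are adjacent precisely when their supports are incomparable; that is, $\Gamma_r(R)$ is the incomparability graph of the Boolean lattice $B_4$ with its top and bottom elements removed.

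Next I would count the edges. Grouping the vertices into the four singletons, the six $2$-subsets, and the four $3$-subsets, the subsets within each layer are pairwise incomparable, contributing $K_4$, $K_6$ and $K_4$, that is $6 + 15 + 6 = 27$ edges. Between layers, a singleton $\{i\}$ is adjacent to the three pairs avoiding $i$ and to the unique triple avoiding $i$, while each pair is adjacent to the two triples not containing it; this adds $12 + 4 + 12 = 28$ cross edges. Hence $v = 14$ and $e = 55$. (Equivalently, one may count the $\binom{14}{2} = 91$ vertex pairs and subtract the $36$ comparable pairs.) The graph is connected, since the six pairs induce $K_6$ and every remaining vertex is adjacent to at least one pair.

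Finally, for a connected simple graph with a $2$-cell (in particular, a minimum-genus) embedding in $\mathbb{S}_g$, the estimate $3f \le 2e$ on face boundaries together with Lemma \ref{eulerformulagenus} gives $e \le 3v - 6 + 6g$, i.e. $g \ge \lceil (e - 3v + 6)/6 \rceil$. Substituting yields $g(\Gamma_r(R)) \ge \lceil (55 - 42 + 6)/6 \rceil = \lceil 19/6 \rceil = 4 \ge 3$, as claimed. The only genuinely delicate point in this argument is the bookkeeping of the edges: once the incomparability structure is described correctly, Euler's inequality closes the estimate at once, and in fact with room to spare, since it even forces $g \ge 4$.
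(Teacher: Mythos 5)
Your proof is correct, but it takes a genuinely different route from the paper. The paper exhibits twelve specific vertices whose induced subgraph contains a subdivision of $K_{5,5}$ (verified by a figure) and then invokes $g(K_{5,5})=3$; this gives exactly the bound $g(\Gamma_r(R))\ge 3$ needed for the lemma. You instead describe the whole graph intrinsically as the incomparability graph of the Boolean lattice $B_4$ with top and bottom removed, count $v=14$ and $e=55$ (both counts check out: $\binom{14}{2}=91$ pairs minus $36$ comparable pairs), and close with the Euler-formula estimate $e\le 3v-6+6g$, obtaining the stronger conclusion $g\ge 4$ with no reliance on a hand-verified picture. The trade-off is that the paper's subgraph argument is local and robust to miscounting, whereas yours requires the global edge count to be exact; on the other hand, yours is fully self-contained and sharper. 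Two small points you are implicitly using and should perhaps make explicit: the inequality $3f\le 2e$ needs the graph to be simple and connected with at least three vertices (which holds here, as you note via the $K_6$ on the rank-two layer), and applying Lemma \ref{eulerformulagenus} to a minimum-genus embedding uses the standard fact that such an embedding of a connected graph is necessarily $2$-cell. With those remarks, your argument is complete and in fact improves the stated bound.
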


\begin{proof}
 Consider the vertices $x_1=(0) \times (1) \times (0) \times (1)$, $x_2= (0) \times (1) \times (0) \times (0) $, $x_3= (0)\times (1) \times (1) \times (1) $, $x_4= (0) \times (1) \times (1) \times (0) $, $x_5= (0) \times (0) \times (1) \times (1) $, $x_6 = (1) \times (0) \times (0) \times (1) $, $x_7= (1) \times (0) \times (0) \times (0) $, $x_8= (1) \times (0) \times (1) \times (1) $, $x_9= (1) \times (0) \times (1) \times (0)  $, $x_{10}= (1) \times (1) \times (0) \times (0) $, $x_{11}= (0) \times (0) \times (1) \times (0) $ and $x_{12}= (1) \times (1) \times (0) \times (1)$ of $g(\Gamma_r(R))$. Then the graph induced by the set $A = \{x_1, x_2, x_3, x_4, x_5, x_6, x_7, x_8, x_9, x_{10}, x_{11}, x_{12} \}$ contains a subgraph homeomorphic to $K_{5,5}$ (see Figure \ref{genusK55}). By Proposition \ref{genus}, we have $g(\Gamma_r(R)) \geq 3$.
\begin{figure}[h!]
    \centering
    \includegraphics[width=0.75 \textwidth]{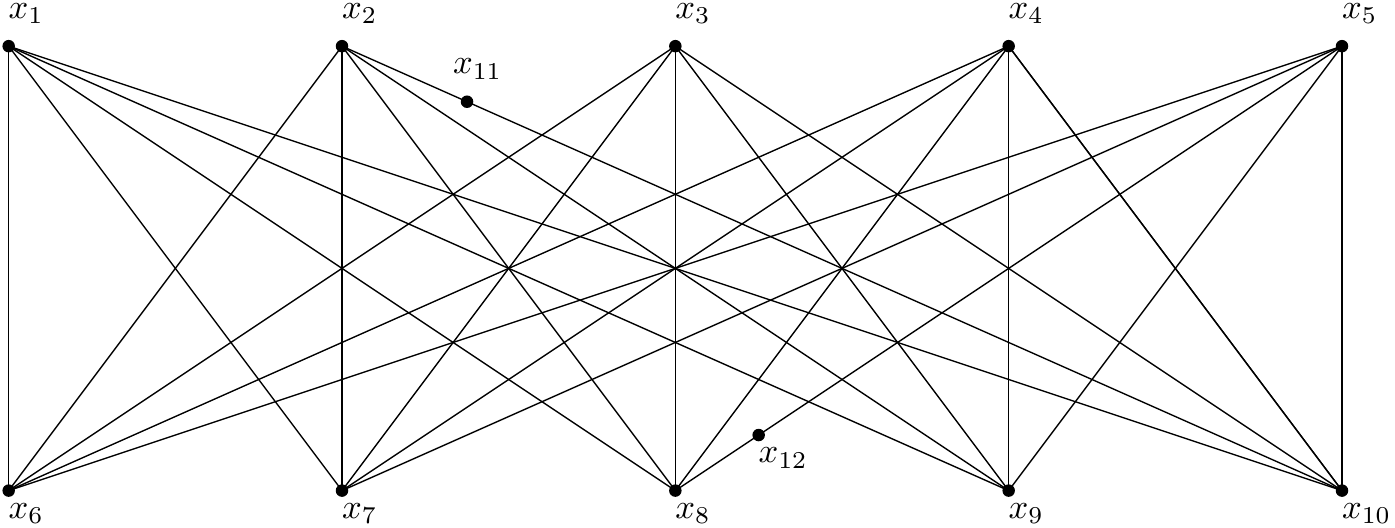}
    \caption{Subgraph of $\Gamma_r(A)$ homeomorphic to $K_{5,5}$.}
    \label{genusK55}
    \end{figure}
\end{proof}

\begin{lemma}\label{genusnotequalto2}
Let $R \cong R_1 \times R_2 \times \cdots \times R_n$ $(n \geq 3)$ be a non-local commutative ring. Then $g(\Gamma_r(R)) \neq 2$. 
\end{lemma}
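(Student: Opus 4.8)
The plan is to show that for $n \geq 3$ the genus is forced away from the value $2$ by bracketing it from both sides: either the graph is too small (genus at most $1$) or too large (genus at least $3$), with no intermediate case landing exactly on $2$. First I would dispose of the case $n \geq 4$. For $n = 4$ the ring surjects onto the quotient $F_1 \times F_2 \times F_3 \times F_4$ obtained by collapsing each local factor $R_i$ modulo its maximal ideal, and the principal ideals of the product pull back compatibly so that $\Gamma_r(F_1 \times F_2 \times F_3 \times F_4)$ appears as an induced subgraph of $\Gamma_r(R)$. Since genus is monotone under taking subgraphs, Lemma \ref{genusgreaterthan2} immediately gives $g(\Gamma_r(R)) \geq 3$ whenever $n \geq 4$, because a four-fold product of fields (hence a $K_{5,5}$ subdivision) is always embedded. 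So the entire problem reduces to the single case $n = 3$.

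For $n = 3$ I would write $R \cong R_1 \times R_2 \times R_3$ and split according to how many of the factors are fields. The reduced cozero-divisor graph only sees principal ideals, so the ``skeleton'' coming from the idempotent choices $(e_1, e_2, e_3)$ with $e_i \in \{0,1\}$ already contributes a rich induced subgraph: the seven nontrivial $0/1$ combinations give vertices whose adjacency is governed purely by the non-containment condition on supports. The key step is to identify a $K_{4,4}$ or an explicit genus-$2$-exceeding configuration inside this skeleton. In fact the three ``weight-one'' ideals $(1,0,0), (0,1,0), (0,0,1)$ together with the three ``weight-two'' ideals $(0,1,1),(1,0,1),(1,1,0)$ and the full-support combinations already realize a complete bipartite pattern; by Proposition \ref{genus}, $g(K_{4,4}) = 1$ and $g(K_{5,5}) = 3$, so I must pin down exactly which complete bipartite minor is present and argue that the presence of any nonzero proper ideal in even one $R_i$ pushes the count past $K_{4,4}$.

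The decisive observation is that if all three $R_i$ are fields, then $R \cong F_1 \times F_2 \times F_3$ has a planar reduced cozero-divisor graph by Theorem \ref{redcozeroplanarfields}, so $g = 0 \neq 2$. If at least one factor, say $R_1$, is a genuine local ring with $\mathcal{M}_1 \neq 0$, then $R_1$ contributes at least one extra nontrivial principal ideal $(a_1)$, and forming all products of $(a_1)$ or $(1)$ in the first coordinate against the idempotent patterns in the other two coordinates enlarges the bipartite skeleton enough to contain a subgraph homeomorphic to $K_{5,5}$, exactly as in the proof of Lemma \ref{genusgreaterthan2}. This again yields $g(\Gamma_r(R)) \geq 3$. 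Thus for $n = 3$ the genus is either $0$ (all fields) or $\geq 3$ (some non-field factor), and the value $2$ is never attained; combined with the $n \geq 4$ reduction, this proves the lemma.

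I expect the main obstacle to be the bookkeeping in the mixed case where some factors are fields and one is a local ring: I must verify that the twelve explicit vertices exhibiting the $K_{5,5}$ subdivision in Lemma \ref{genusgreaterthan2} can be honestly reproduced using a single nontrivial proper principal ideal from the local factor in place of one of the four field coordinates, checking that every claimed non-containment still holds and that no two chosen ideals coincide or become comparable. The containment conditions in the first coordinate are more delicate than the clean $0/1$ dichotomy, since $(a_1) \subsetneq (1)$ but $(0) \subsetneq (a_1)$, so the adjacencies among the modified vertices must be re-examined rather than merely transported. Once that verification is secured, monotonicity of genus under induced subgraphs closes the argument.
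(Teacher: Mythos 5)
There is a genuine gap in your treatment of the case $n=3$ with exactly one non-field factor. Your claimed dichotomy --- genus $0$ if all three factors are fields, genus $\geq 3$ as soon as some $R_i$ has $\mathcal{M}_i \neq 0$ --- is false. Take $R \cong \mathbb{Z}_4 \times \mathbb{Z}_2 \times \mathbb{Z}_2$: here $\mathcal{M}_1 = (2)$ is the only nontrivial principal ideal of $R_1$, the first coordinate of any vertex is one of only three principal ideals $(0) \subset (2) \subset R_1$, and Theorem \ref{redcozerogenuslocalfield}(1) shows $g(\Gamma_r(R)) = 1$. In particular no subgraph homeomorphic to $K_{5,5}$ (genus $3$) can exist there, so your assertion that ``forming all products of $(a_1)$ or $(1)$ in the first coordinate against the idempotent patterns in the other two coordinates'' always produces such a subgraph cannot be correct: the chain $(0) \subset (a_1) \subset R_1$ kills too many adjacencies. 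The lemma still holds in this case (since $1 \neq 2$), but not for the reason you give, and your argument as structured would ``prove'' the false statement $g \geq 3$.

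The paper's proof handles this by a finer case split that yours omits. For $R_1 \times F_2 \times F_3$ with $R_1$ not a field it distinguishes: (i) $\mathcal{M}_1$ non-principal, where an explicit $K_{5,5}$ is exhibited; (ii) $\mathcal{M}_1 = (a_1)$ the unique nontrivial principal ideal, where Theorem \ref{redcozerogenuslocalfield} is invoked to get genus exactly $1$; and (iii) $\mathcal{M}_1 = (a_1)$ with nilpotent index at least $3$, where a different $K_{5,5}$ subdivision is built using the powers $(a_1), (a_1^2), \ldots$ in the first coordinate. A fourth case (at least two non-field factors) needs yet another explicit configuration. To repair your proof you must add case (ii) with its appeal to the known genus-$1$ classification, and actually exhibit the $K_{5,5}$ subdivisions in the remaining cases rather than asserting their existence from the idempotent skeleton alone; the verification you flag at the end as ``bookkeeping'' is in fact where the argument either succeeds or fails, and in case (ii) it fails. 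Your reduction of $n \geq 4$ to Lemma \ref{genusgreaterthan2} is fine, since the twelve vertices used there involve only the ideals $(0)$ and $R_i$ in each coordinate and their mutual adjacencies persist in any product of local rings.
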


\begin{proof}
If $n \geq 4$, then Lemma \ref{genusgreaterthan2} follows that $g(\Gamma_r(R)) \geq 3$. Now let $n = 3$. Then $R \cong R_1 \times R_2 \times R_3$. On contrary assume that $g(\Gamma_r(R)) = 2$. If each $R_i$ is a field, then by Theorem \ref{redcozeroplanarfields}, we obtain $g(\Gamma_r(R)) = 0$. Let $R_1$ be not a field and $R_2$, $R_3$ be fields.  Suppose that $\mathcal{M}_1 = (a_1, a_2, \ldots, a_k)$, where $k \geq 2$, is a maximal ideal of $R_1$. Then consider the sets $X =  \{(a_1) \times R_2 \times R_3,\ (a_1 + a_2) \times (0) \times R_3, \ (a_1)\times (0)\times R_3, \ (a_1) \times R_2 \times (0), \ (a_1)\times (0) \times (0) \}$ and $Y = \{(a_2) \times R_2 \times R_3, \ (a_1 + a_2) \times R_2 \times (0), \ (a_2)\times (0)\times R_3, \ (a_2) \times R_2 \times (0), \ (a_2)\times (0) \times (0)\}$. The subgraph induced by the set $X \cup Y$ contains a subgraph isomorphic to $K_{5,5}$ with partition sets $X$ and $Y$. Therefore, by Proposition \ref{genus}, we obtain $g(\Gamma_r(R)) \geq 3$. It follows that $\mathcal{M}_1 = (a_1)$. If $R_1$ has only one nontrivial ideal $\mathcal{M}_1$, then by Theorem \ref{redcozerogenuslocalfield}, the graph $\Gamma_r(R)$ has genus $1$. We may now suppose that $\mathcal{M}_1 = (a_1)$ is a principal ideal with nilpotent index at least three. Then $\Gamma_r(R)$ contains a subgraph homeomorphic to $K_{5,5}$ (see Figure \ref{subdivision of K55 fig2}). By Proposition \ref{genus}, we have $g(\Gamma_r(R)) \geq 3$.
\begin{figure}[h!]
\centering
\includegraphics[width=0.75 \textwidth]{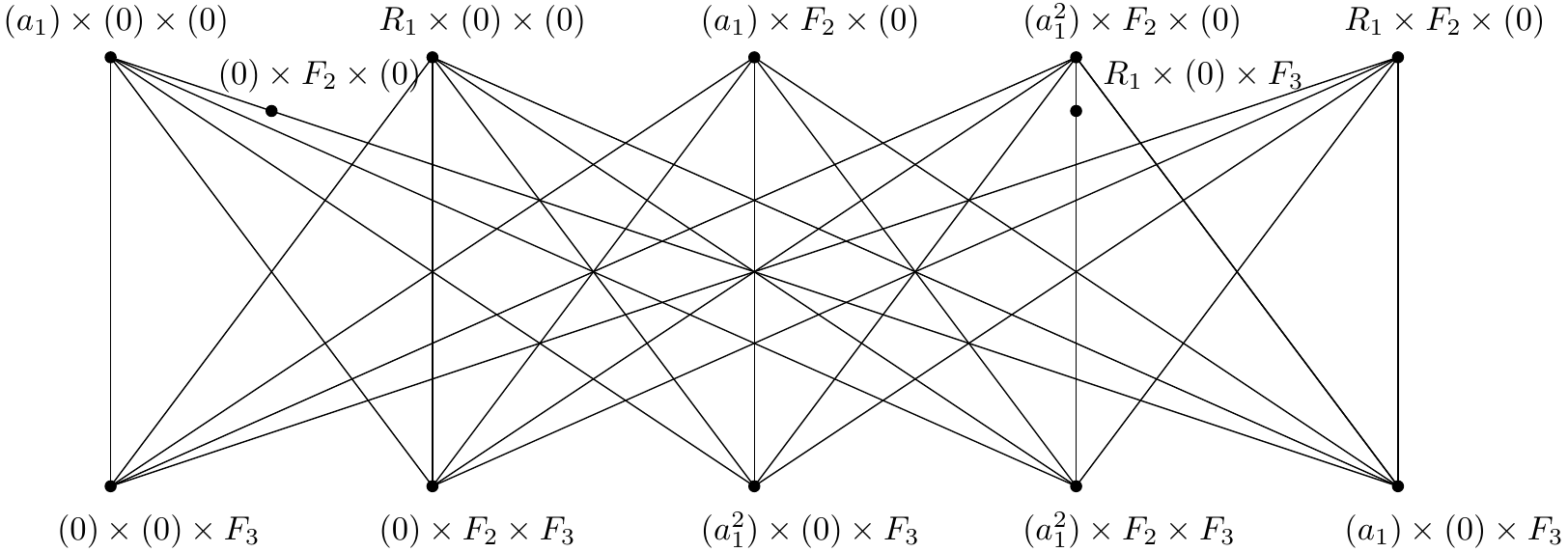}
\caption{ \centering Subgraph of $\Gamma_r(R_1 \times F_2 \times F_3 )$, where $\eta_1 \ge 3$.}
\label{subdivision of K55 fig2}
\end{figure}

Further, assume that either $R_2$ or $R_3$ is not a field. Without loss of generality, suppose that $R_2$ is not a field and $(b_1)$ is a nontrivial ideal of $R_2$. Then $\Gamma_r(R)$ contains a subgraph homeomorphic to $K_{5,5}$ (see Figure \ref{subdivision of K55 fig1}) and so by Proposition \ref{genus}, we get $g(\Gamma_r(R)) \geq 3$.
\begin{figure}[h!]
\centering
\includegraphics[width=0.75 \textwidth]{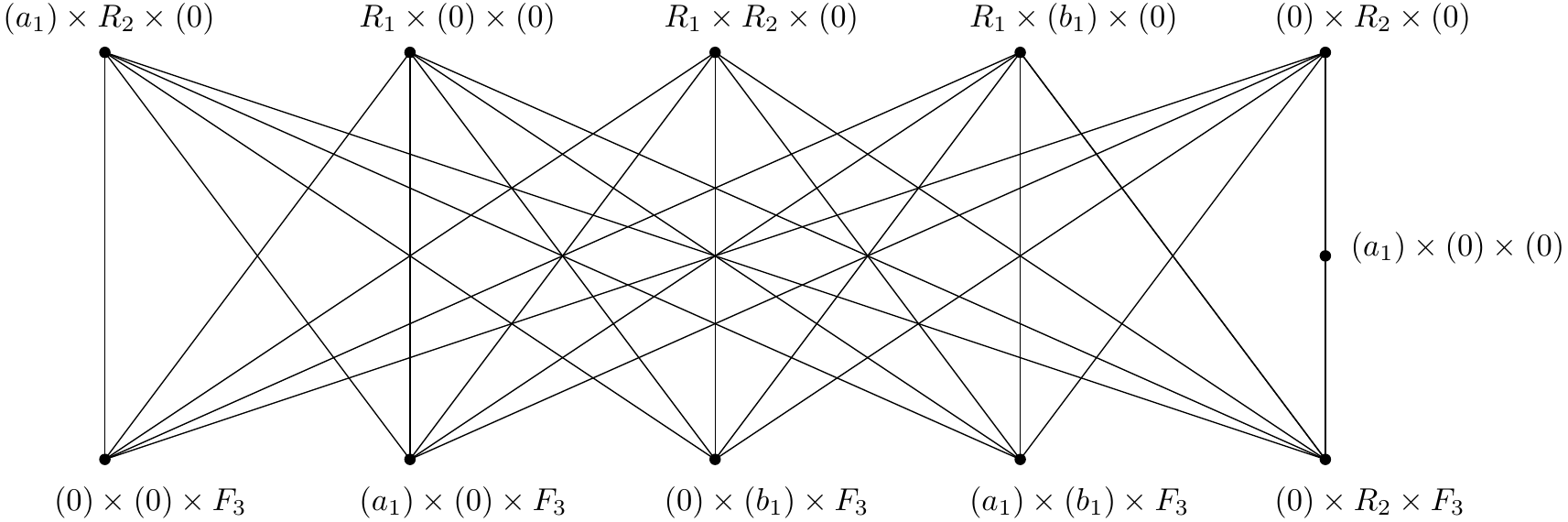}
\caption{ \centering A subgraph of $\Gamma_r(R_1 \times R_2 \times F_3 )$, where $\mathcal{M}_1 = (a_1)$.}
\label{subdivision of K55 fig1}
\end{figure}
This completes our proof.
\end{proof}

\noindent\textbf{Proof of Theorem \ref{redcozerogenus2}:}
First suppose that $g(\Gamma_r(R)) = 2$. By Lemma \ref{genusnotequalto2}, we have $R \cong R_1 \times R_2$, where $R_1$ and $ R_2$ are local rings having maximal ideals $\mathcal{M}_1$ and $\mathcal{M}_2$, respectively. Suppose that one of $\mathcal{M}_i$ is not principal. Without loss of generality, assume that $\mathcal{M}_1 = (a_1, a_2)$. 
 Let $R_2$ be a field. If $a_1^{2} \neq 0$ and $a_2^{2} \neq  0$, then $\Gamma_r(R_1 \times F_2)$ contains a subgraph homeomorphic to $K_{5,5}$ (see Figure \ref{reducedcozeroK55fig5}), a contradiction.
  \begin{figure}[h!]
\centering
\includegraphics[width=0.75 \textwidth]{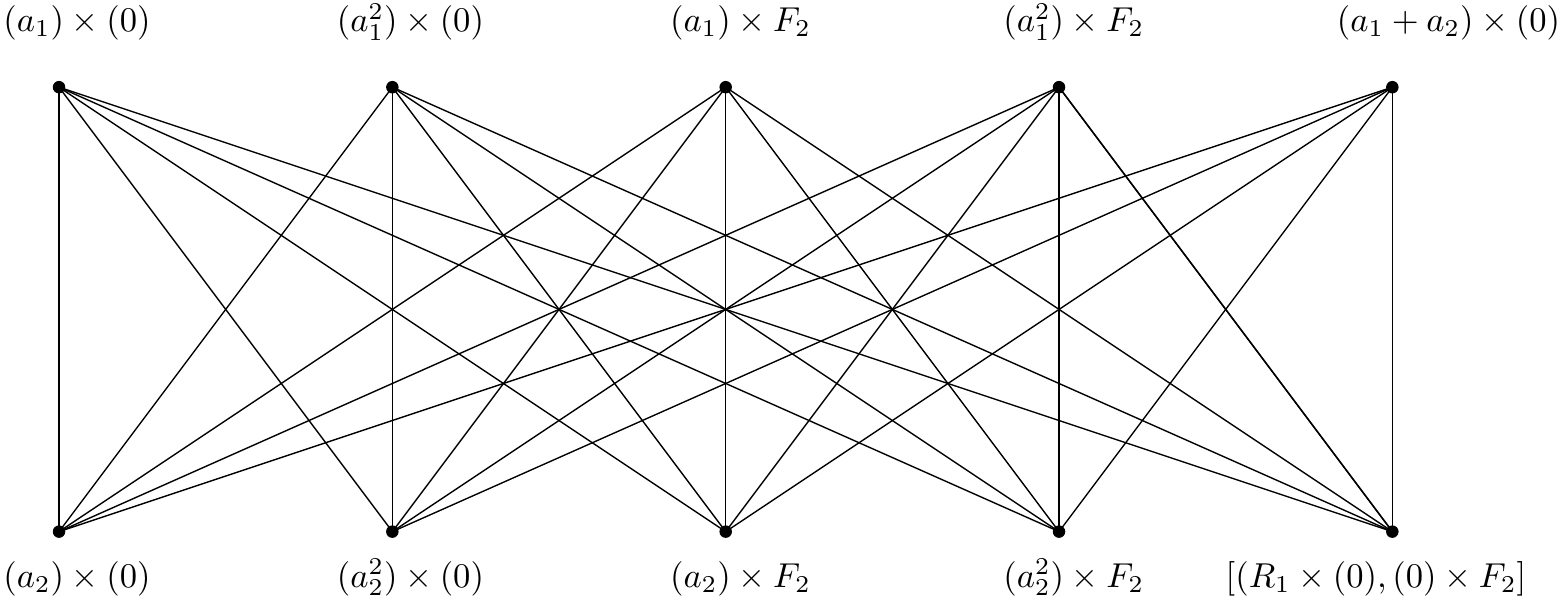}
\caption{ \centering A subgraph of $\Gamma_r(R_1 \times F_2)$, where $\mathcal{M}_1 = (a_1, a_2)$ such that $a_1^2, a_2^2 \neq 0$.}
\label{reducedcozeroK55fig5}
\end{figure}
 If $a_1^{2} \neq 0$ but $a_2^{2} = 0$, then note that $\Gamma_r(R_1 \times F_2)$ contains a subgraph homeomorphic to $K_{5,5}$ (see Figure \ref{reducedcozeroK55fig6}), which is not possible.
 \begin{figure}[h!]
\centering
\includegraphics[width=0.75 \textwidth]{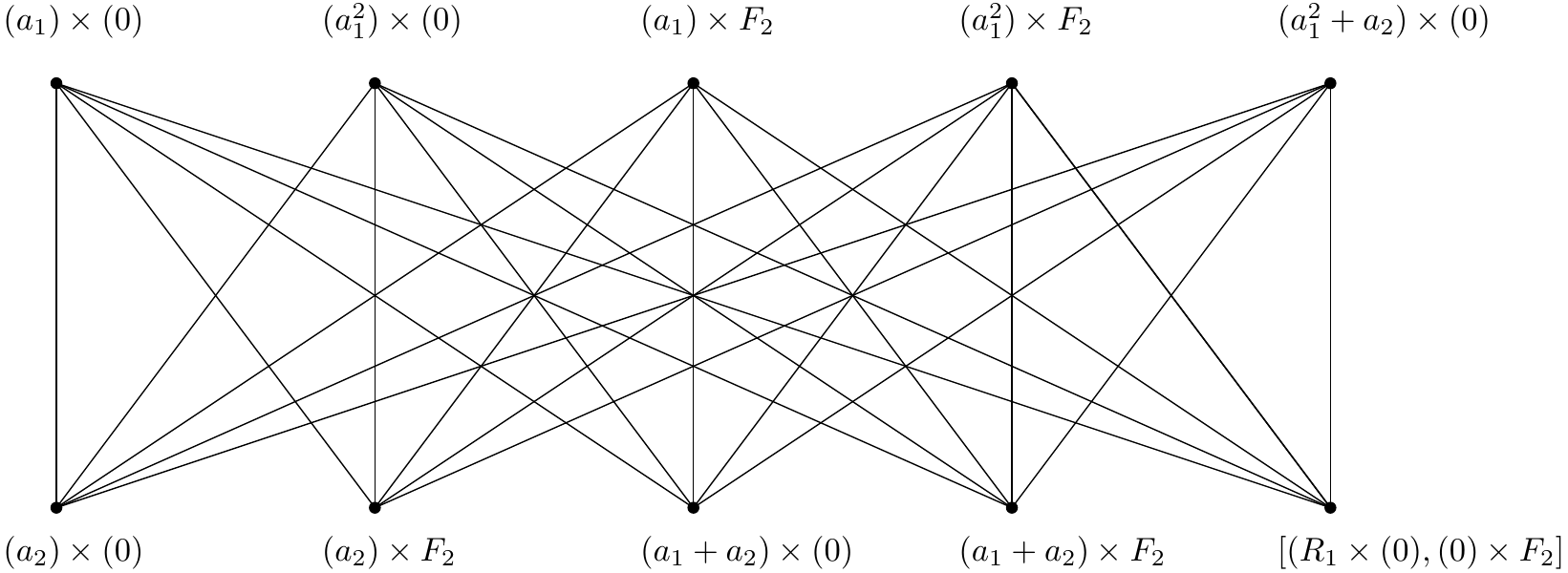}
\caption{ \centering A subgraph of $\Gamma_r(R_1 \times F_2)$, where $\mathcal{M}_1 = (a_1, a_2)$ such that $a_1^2 \neq 0$ but $a_2^2 = 0$.}
\label{reducedcozeroK55fig6}
\end{figure}

 Next, let $a_1^{2} = 0 = a_2^{2}$. If $|U(R_1)| = 1$, then $(a_1)$, $(a_2)$, $(a_1 + a_2)$ and $(a_1a_2)$ are the only nontrivial principal ideals of $R_1$. By Theorem \ref{redcozerogenuslocalfield}, we have $g(\Gamma_r(R)) = 1$. Next, assume that $|U(R_1)| = 2$ and $1, \alpha \in U(R_1) $. Consider the nontrivial principal ideals $(a_1)$, $(a_2)$, $(a_3)$, and $(a_4)$ of $R_1$, where $a_3 = a_1 + a_2$ and $a_4 = a_1 + \alpha a_2$. Note that $(a_i) \not\subset (a_j)$ for distinct $i,j \in \{1, 2, 3, 4\}$. Then by Figure \ref{subdivision of 2K5}, note that $\Gamma_r(R)$ contains a subgraph $H$ isomorphic to $K_{5}$ and so $g(\Gamma_r(H)) = 1$.
 \begin{figure}[h!]
\centering
\includegraphics[width=0.5 \textwidth]{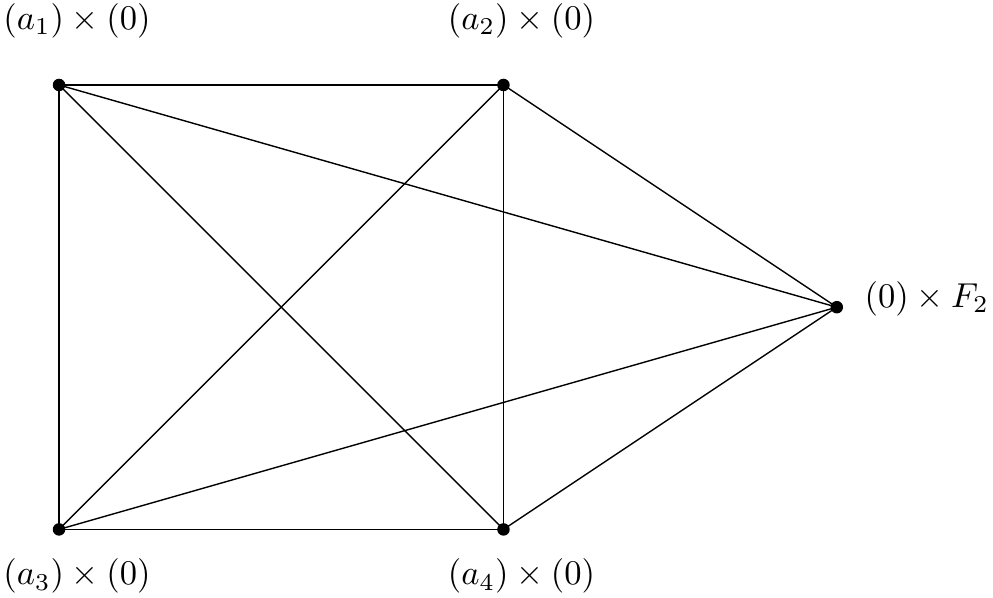}
\caption{A subgraph $H$ of $\Gamma_r(R_1 \times F_2 )$, where $\mathcal{M}_1 = (a_1, a_2)$.}
\label{subdivision of 2K5}
\end{figure}
It implies that $g(\Gamma_r(R)) \geq 1$. Suppose that $g(\Gamma_r(R)) = 1$. Now to embed $\Gamma_r(R)$ in $\mathbb{S}_1$ through $H$, first we insert the vertices $(a_1) \times F_2$, $(a_2) \times F_2$ and $(a_3) \times F_2$ in an embedding of $H$ in $\mathbb{S}_1$. Since the vertices $(a_1) \times F_2$, $(a_2) \times F_2$ and $(a_3) \times F_2$ are adjacent with each other, therefore they must be inserted in the same face $F$. Note that the vertex $(a_1) \times F_2$ is adjacent with $(a_2) \times (0)$, $(a_3) \times (0)$ and $(a_4) \times (0)$. Also, $(a_2) \times F_2$ is adjacent with $(a_1) \times (0)$, $(a_3) \times (0)$ and $(a_4) \times (0)$. Furthermore, $(a_3) \times F_2$ is adjacent with $(a_1) \times (0)$, $(a_2) \times (0)$ and $(a_4) \times (0)$. It follows that the face $F$ must contain the vertices $(a_1) \times (0)$, $(a_2) \times (0)$, $(a_3) \times (0)$ and $(a_4) \times (0)$. Consequently, insertion of the vertices $(a_1) \times F_2$, $(a_2) \times F_2$, $(a_3) \times F_2$ and their incident edges in $F$ is not possible without edge crossings, a contradiction. Now let $H'$ be the subgraph induced by the vertex set $V(H) \cup \{ (a_1) \times F_2$, \ $(a_2) \times F_2$, \ $(a_3) \times F_2\}$. It implies that $g(\Gamma_r(H')) \geq 2$ and so $g(\Gamma_r(R)) \geq 2$. Suppose $g(\Gamma_r(H')) = 2$. Using $H'$, to embed $\Gamma_r(R)$ in $\mathbb{S}_2$ first we insert the vertices $R_1 \times (0)$, $(a_4) \times F_2$ and their incident edges in an embedding of $H'$ in $\mathbb{S}_2$. Since both the vertices $R_1 \times (0)$ and $(a_4) \times F_2$ are adjacent, they must be inserted in the same face $F'$. Note that both the vertices $R_1 \times (0)$ and $(a_4) \times F_2$ are adjacent with  $(a_1) \times F_2$, $(a_2) \times F_2$ and $(a_3) \times F_2$. It implies that $F'$ must contain the vertices $(a_1) \times F_2$, $(a_2) \times F_2$ and $(a_3) \times F_2$, which leads to an edge crossing, a contradiction. It follows that $g(\Gamma_r(R_1 \times R_2)) \geq 3$. Therefore, for  $g(\Gamma_r(R)) = 2$, we must have $R \cong R_1 \times R_2$ such that $\mathcal{M}_1$ and $\mathcal{M}_2$ are principal ideals of $R_1$ and $R_2$, respectively.   
 
 First suppose that $R_2$ is a field and $\mathcal{M}_1 = (a_1)$, where $a_1 \neq 0$. If $\eta_1 =8$, then by Figure \ref{subdivision of 2K33 of nilpotency8}, note that $\Gamma_r(R)$ contains a subgraph $G$ isomorphic to $K_{3,3}$.
 \begin{figure}[h!]
\centering
\includegraphics[width=0.5 \textwidth]{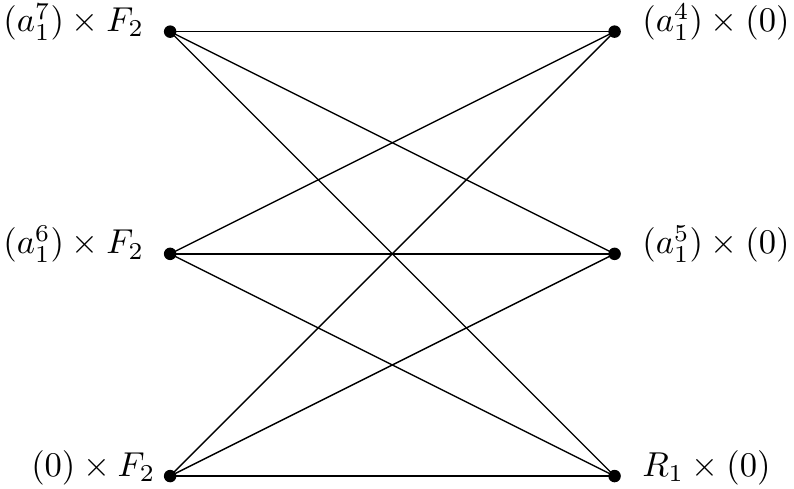}
\caption{Subgraph $G$ of $\Gamma_r(R_1 \times F_2 )$, where $\eta_1 =8$.}
\label{subdivision of 2K33 of nilpotency8}
\end{figure}
It follows that $g(\Gamma_r(G)) = 1$ and so $g(\Gamma_r(R)) \ge 1$. Suppose that $g(\Gamma_r(R)) = 1$. Now to embed $\Gamma_r(R)$ in $\mathbb{S}_1$ through $G$, first we insert the vertices $(a_1^{3}) \times (0)$, $(a_1^{4}) \times F_2$, $(a_1^{5}) \times F_2$ and their incident edges in an embedding of $G$ in $\mathbb{S}_1$. Since $(a_1^{4}) \times F_2 \sim (a_1^{3}) \times (0) \sim (a_1^{5}) \times F_2$, they must be inserted in the same face $F''$. Note that the vertex $(a_1^{3}) \times (0)$ is adjacent with $(a_1^{6}) \times F_2$, $(a_1^{7}) \times F_2$ and $(0) \times F_2$. Also, $(a_1^{4}) \times F_2$ is adjacent with $R_1 \times (0)$. Furthermore, $(a_1^{5}) \times F_2$ is adjacent with $R_1 \times (0)$ and $(a_1^{4}) \times (0)$. It follows that the face $F''$ must contain the vertices $(a_1^{6}) \times F_2$, $(a_1^{7}) \times F_2$, $(0) \times F_2$, $R_1 \times (0)$ and $(a_1^{4}) \times (0)$. Since $G$ is a bipartite graph, $F''$ must be of length $6$ containing all the vertices of $G$. After inserting the vertex $(a_1^{3}) \times (0)$ in $F''$, insertion of the vertices $(a_1^{4}) \times F_2$ and $(a_1^{5}) \times F_2$ is not possible without edge crossings. Let $G'$ be the graph induced by the vertex set $V(G) \cup \{ (a_1^{3}) \times (0), \ (a_1^{4}) \times F_2, \ (a_1^{5}) \times F_2\} $. It implies that $g(\Gamma_r(G')) \ge 2$ and so $g(\Gamma_r(R)) \ge 2$. Suppose $g(\Gamma_r(G')) = 2$. To embed $\Gamma_r(R)$ in $\mathbb{S}_2$, first we insert the vertices $(a_1) \times (0)$, $(a_1^2) \times (0)$, $(a_1^3) \times F_2$ and their incident edges in the embedding of $G'$ in $\mathbb{S}_2$. Since $(a_1) \times (0) \sim (a_1^3) \times F_2 \sim (a_1^2) \times (0)$, they must be inserted in the same face $F'''$. Note that both the vertices $(a_1) \times (0)$ and $(a_1^2) \times (0)$ are adjacent with  $(a_1^6) \times F_2$, $(a_1^7) \times F_2$ and $(0) \times F_2$. It implies that $F'''$ must contain the vertices $(a_1^6) \times F_2$, $(a_1^7) \times F_2$ and $(0) \times F_2$ which leads to an edge crossing, a contradiction. Therefore,  $g(\Gamma_r(R)) \geq 3$, a contradiction.
If $\eta_1 \le 6$, then $g(\Gamma_r(R)) \leq 1$ (cf. \cite{jesili2022genus, kavitha2017genus}), a contradiction. Thus, $R \cong R_1 \times F_2$ such that $\eta_1 =7$.

Further, we may now assume that $R_2$ is not a field. Consider $\mathcal{M}_1 = (a_1)$ and $\mathcal{M}_2 =(b_1)$. Without loss of generality suppose that $\eta_1 = 5$. If $\eta_2 = 2$, then by Figure \ref{reducedcozeroK33nilpotency5,2}, $\Gamma_r(R)$ contains a subgraph $K$ isomorphic to $K_{3,3}$.
\begin{figure}[h!]
\centering
\includegraphics[width=0.5 \textwidth]{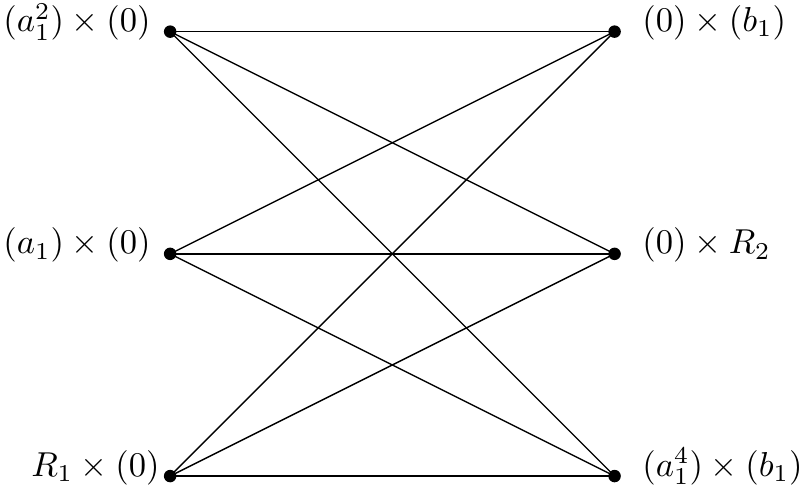}
\caption{Subgraph $K$ of $\Gamma_r(R_1 \times R_2 )$, where $\eta_1 =5$ and $\eta_2 =2$.}
\label{reducedcozeroK33nilpotency5,2}
\end{figure}
Consequently, $g(\Gamma_r(K)) = 1$ and so $g(\Gamma_r(R)) \geq 1$. Suppose that $g(\Gamma_r(R)) = 1$. To embed $\Gamma_r(R)$ in $\mathbb{S}_1$, first we insert the vertices $(a_1^4) \times R_2$, $(a_1^2) \times (b_1)$ and $(a_1^3) \times R_2$ and their incident edges in an embedding of $K$ in $\mathbb{S}_1$. Since  $(a_1^4) \times R_2 \sim (a_1^2) \times (b_1) \sim (a_1^3) \times R_2$, these vertices must be inserted in the same face $F^1$. Note that both the vertices $(a_1^4) \times R_2$ and $(a_1^3) \times R_2$ are adjacent with the vertices $(a_1) \times (0)$, $(a_1^2) \times (0)$ and $R_1 \times (0)$. Therefore, $F^1$ must contain the vertices $(a_1) \times (0)$, $(a_1^2) \times (0)$ and $R_1 \times (0)$. It implies that insertion of the vertices $(a_1^4) \times R_2$ and $(a_1^3) \times R_2$ in $F^1$ is not possible without edge crossings. Now let $K'$ be the subraph of $\Gamma_r(R)$, induced by the vertex set $V(K) \cup \{ (a_1^4) \times R_2, \ (a_1^2) \times (b_1), \ (a_1^3) \times R_2 \}$. It follows that $g(\Gamma_r(K')) \ge 2$ and so $g(\Gamma_r(R)) \ge 2$. Suppose that $g(\Gamma_r(K')) = g(\Gamma_r(R)) = 2$. Now to embed $\Gamma_r(R)$ in $\mathbb{S}_2$, first we insert the vertices $R_1 \times (b_1)$, $(a_1^2) \times R_2$ and $(a_1) \times (b_1)$ and their incident edges in an embedding of $K'$ in $\mathbb{S}_2$. Since $R_1 \times (b_1) \sim (a_1^2) \times R_2 \sim (a_1) \times (b_1)$, they must be inserted in the same face $F^2$. Note that both the vertices $R_1 \times (b_1)$ and $(a_1) \times  (b_1)$ are adjacent with the vertices $(0) \times R_2$, $(a_1^3) \times R_2$ and $(a_1^4) \times R_2$. Therefore, insertion of the vertices $R_1 \times (b_1)$ and $(a_1) \times  (b_1)$ in the same face is not possible without edge crossings. Consequently, $g(\Gamma_r(R)) \ge 3$, a contradiction. It implies that $\eta_i \le 4$ for each $i \in \{ 1,2\}$. 

Without loss of generality assume that $\eta_1 = 4$.
If $\eta_2 = 3$, then consider the sets $U_1 = \{(a_1^{3})\times R_2, \ (a_1^{3}) \times (b_1), \ (0) \times (b_1), \ (a_1^{2})\times (b_1), \ (0)\times R_2\}$ and $V_1 = \{R_1 \times (0), \ R_1 \times (b_1^{2}), \ (a_1) \times (0), \ (a_1) \times (b_1^{2}), \ (a_1^{2}) \times (b_1^{2})\}$. Then the set $U_1 \cup V_1$ induced a subgraph $\gamma$ homeomorphic to $K_{5,5}-\{\big((a_1^{2}) \times (b_1^{2}), \ (a_1^{2})\times (b_1)\big)\}$. Since $\gamma$ contains a subgraph isomorphic to $K_{5,4}$, we have $g(\Gamma_r(\gamma)) \geq 2$ and so $g(\Gamma_r(R)) \geq 2$. Suppose that $g(\Gamma_r(R)) = 2$. Now to embed $\Gamma_r(R)$ in $\mathbb{S}_2$, first we insert the vertices $R_1 \times (b_1)$, $(a_1^2) \times R_2$, $(a_1) \times (b_1)$ and their incident edges in an embedding of $\gamma$ in $\mathbb{S}_2$. Since $R_1 \times (b_1) \sim (a_1^2) \times R_2 \sim (a_1) \times (b_1)$, they must be inserted in the same face $F$. Note that $R_1 \times (b_1)$ is adjacent with $(0) \times R_2$ and $(a_1^3) \times R_2$. Also, $(a_1^2) \times R_2$ is adjacent with $R_1 \times (0)$. Further, $(a_1) \times (b_1)$ is adjacent with $(0) \times R_2$, $(a_1^3) \times R_2$ and $R_1 \times (0)$. It implies that $F$ must contain the vertices $(0) \times R_2$, $(a_1^3) \times R_2$ and $R_1 \times (0)$. After inserting the vertices $R_1 \times (b_1)$ and $(a_1^2) \times R_2$ in $F$, insertion of the vertex $(a_1) \times (b_1)$ is not possible without edge crossings. It implies that $g(\Gamma_r(R)) \geq 3$, which is a contradiction. It follows that $\eta_1 = 4$ and $\eta_2 = 2$. Further, suppose that $\eta_1 = 3$. If $\eta_2 = 3$, then by Figure \ref{k55minus1}, we have a subgraph $H$ of $\Gamma_r(R)$ which is isomorphic to $K_{5,5}-\{\big((a_1^{2}) \times (0), \ (a_1^{2}) \times (b_1)\big), \  \big((a_1^{2}) \times (0), \ (a_1^{2}) \times R_2\big), \ \big((0) \times (b_1^{2}), \ (a_1) \times (b_1^{2})\big), \ \big((0) \times (b_1^{2}), \ R_1 \times (b_1^{2})\big)\}$.
\begin{figure}[h!]
\centering
\includegraphics[width=0.75 \textwidth]{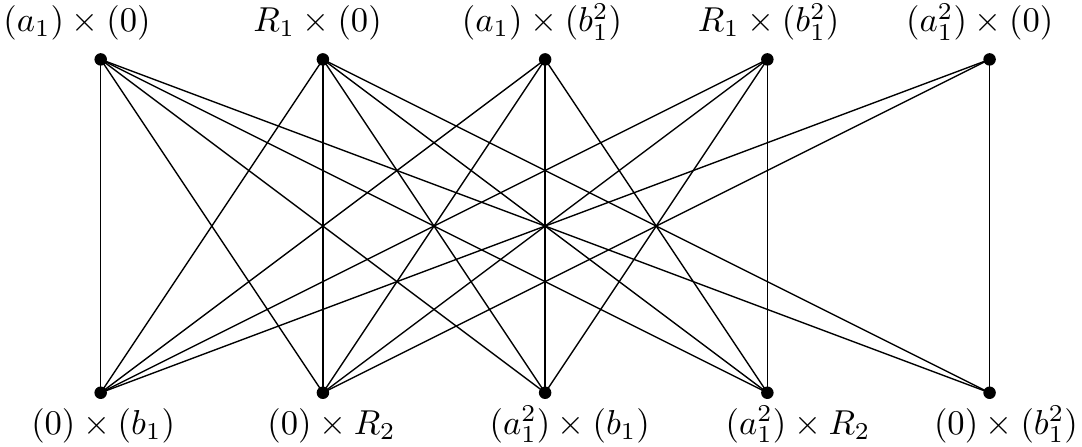}
\caption{Subgraph $H$ of $\Gamma_r(R_1 \times R_2)$, where $\eta_1 =3$ and $\eta_2 =3$. }
\label{k55minus1}
\end{figure}
Observe that $H$ is a bipartite graph and contains a subgraph isomorphic to $K_{4,4}$. Consequently, $g(\Gamma_r(H)) \ge 1$. Supppose that $g(\Gamma_r(H)) =1$. Note that the subgraph $H$ has $10$ vertices and $21$ edges. By Lemma \ref{eulerformulagenus}, we get $f=11$ in the embedding of $H$ in $\mathbb{S}_1$. It implies that $4f > 2e$, which is not possible. It follows that $g(\Gamma_r(R)) \ge g(\Gamma_r(H)) \geq 2$. Assume that $g(\Gamma_r(R)) = 2$. Now to embed the $\Gamma_r(R)$ in $\mathbb{S}_2$, first we insert the vertices $(a_1^{2}) \times (b_1^{2})$, $(a_1) \times R_2$, $R_1 \times (b_1)$, $(a_1) \times (b_1)$ and their respective incident edges. Since $(a_1) \times R_2 \sim R_1 \times (b_1)$, these vertices must be inserted in the same face $F'$. Note that $(a_1) \times R_2$ is adjacent with $R_1 \times (0)$ and $R_1 \times (b_1^{2})$. Also, $R_1 \times (b_1)$ is adjacent with $(0) \times R_2$ and $(a_1^{2})\times R_2$. It implies that the face $F'$ must contain the vertices $R_1 \times (0)$, $R_1 \times (b_1^{2})$, $(0) \times R_2$ and $(a_1^{2})\times R_2$. Notice that the vertex $(a_1) \times (b_1)$ is also adjacent with $R_1 \times (0)$, $R_1 \times (b_1^{2})$, $(0) \times R_2$ and $(a_1^{2})\times R_2$. Since there does not exist any vertex of degree $2$ in the graph $H$, these four vertex cannot be common in two faces. It follows that the vertex $(a_1) \times (b_1)$ must be inserted in $F'$, which is not possible without edge crossings. Therefore, $g(\Gamma_r(R)) \geq 3$, a contradiction. If $\eta_2 = 2$, then by Theorem \ref{redcozerogenuslocal}, we get $g(\Gamma_r(R)) = 1$, a contradiction. We may now assume that $\eta_1 =2 $ and $\eta_2 = 2$. Then by Theorem \ref{redcozeroplanarlocal}, $g(\Gamma_r(R)) = 0$, a contradiction.

The converse follows from Figure \ref{genus2nilpotency7} and Figure \ref{genus2nilpotency4,2}.
\begin{figure}[h!]
\centering
\includegraphics[width=0.6 \textwidth]{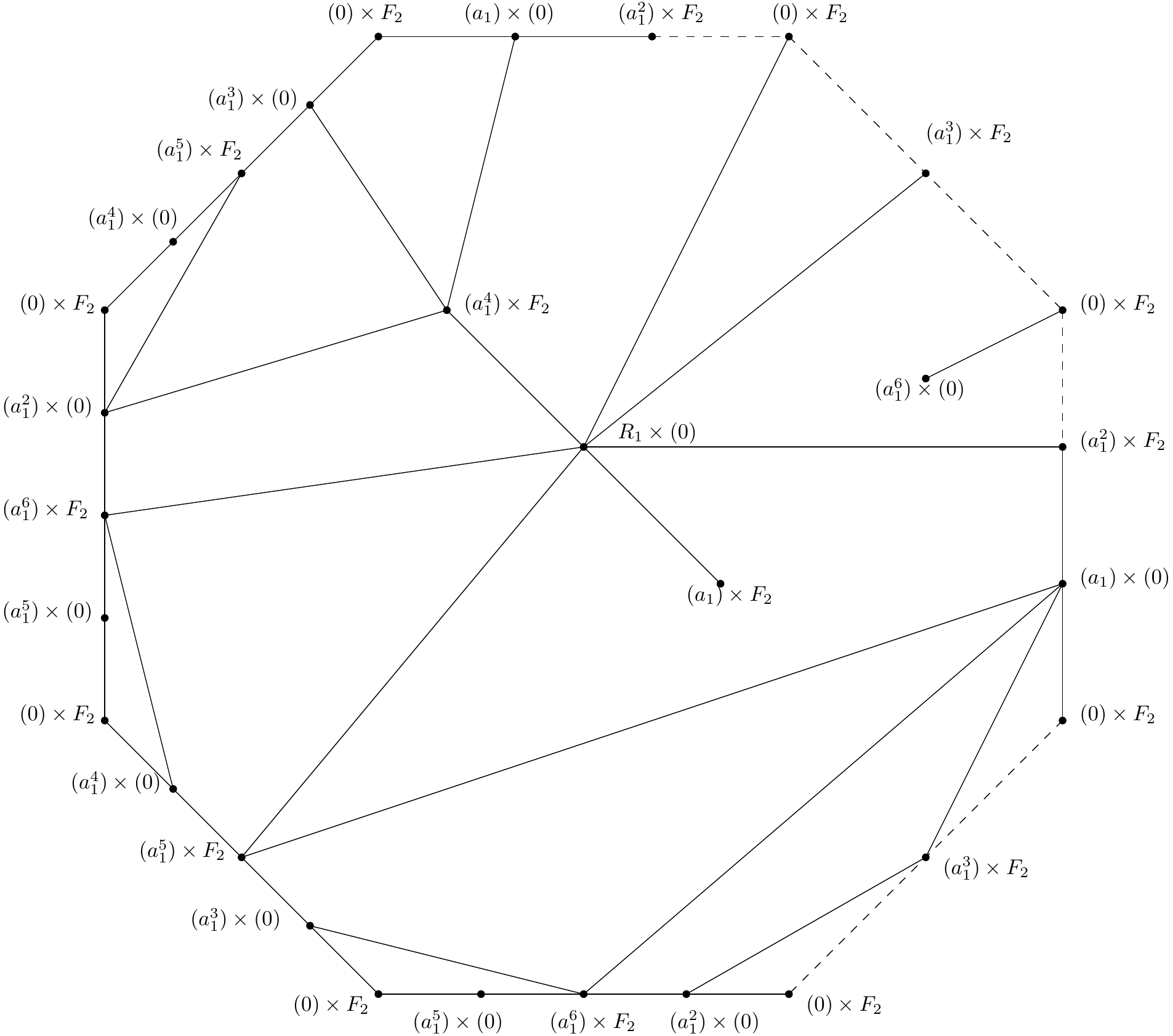}
\caption{Embedding of $\Gamma_r(R_1 \times F_2)$ in $\mathbb{S}_2$, where $\eta_1 = 7$.}
\label{genus2nilpotency7}
\end{figure}\begin{figure}[h!]
\centering
\includegraphics[width=0.6 \textwidth]{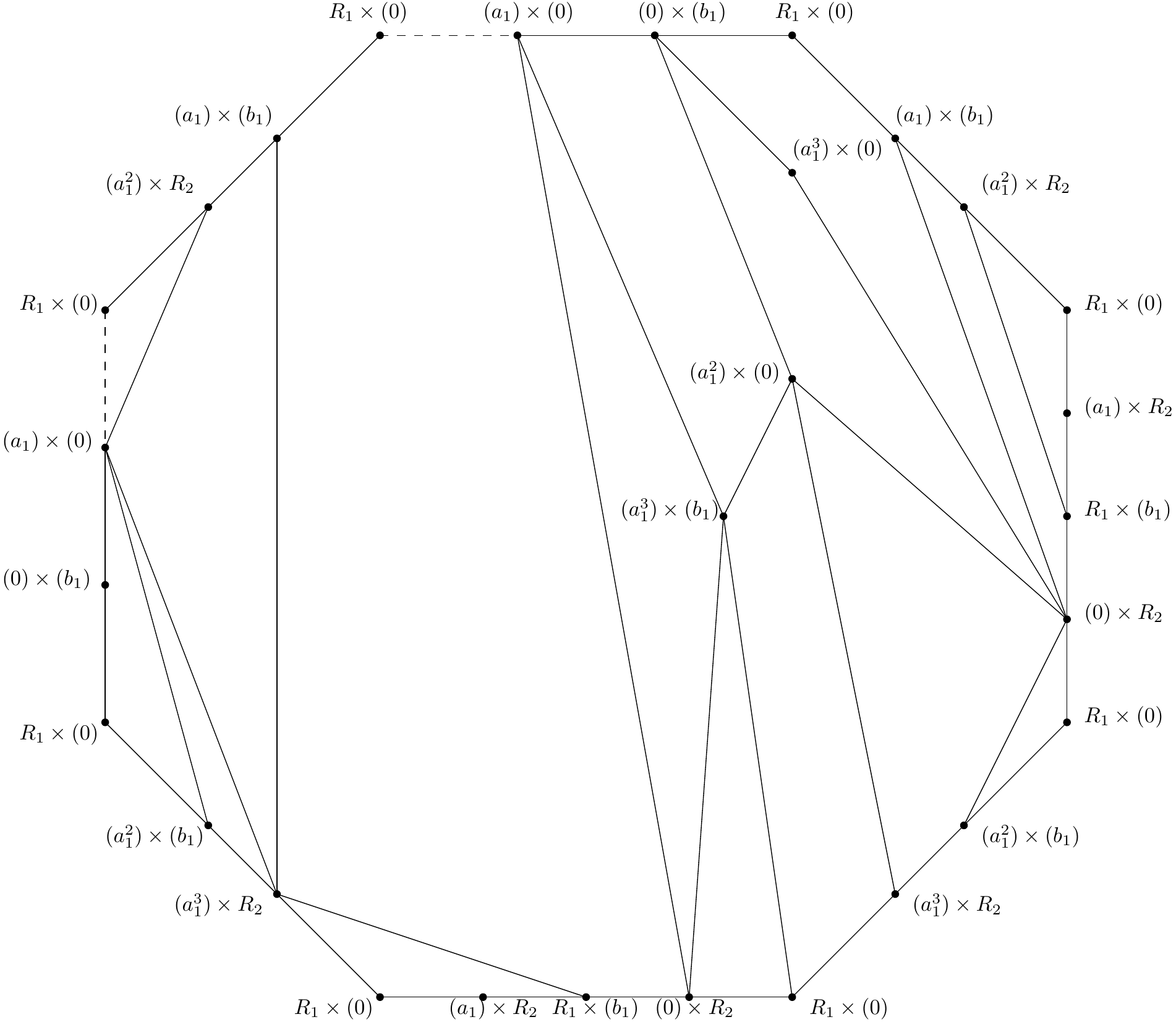}
\caption{Embedding of $\Gamma_r(R_1 \times R_2)$ in $\mathbb{S}_2$, where $\eta_1 = 4$ and  $\eta_2 = 2$.}
\label{genus2nilpotency4,2}
\end{figure}

%%%%%%%%%%%%%%%%%%%%%%%%%%%%%%%%%%%%%%%%%%%%%%%%%%%%%%%%%%%%%%%%%%%%%%%%%%%%
\vspace{.3cm}
Now we classify all the non-local commutative rings $R$ such that the cozero-divisor graph $\Gamma'(R)$ is of genus $2$. The following lemma is essential for the proof of the Theorem \ref{genusofR1R2}.

\begin{lemma}\label{genusofR1R2R3}
   Let $R \cong R_1 \times R_2 \times \cdots \times R_n$ $(n \geq 3)$ be a non-local commutative ring. Then $g(\Gamma'(R)) \neq 2$. 
\end{lemma}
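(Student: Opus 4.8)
The plan is to show that for any non-local ring $R \cong R_1 \times \cdots \times R_n$ with $n \geq 3$, the graph $\Gamma'(R)$ always contains a subgraph whose genus is at least $3$, hence $g(\Gamma'(R)) \neq 2$. The natural strategy mirrors the approach used for the reduced cozero-divisor graph in Lemma \ref{genusnotequalto2}: I would exhibit an explicit induced (or homeomorphic) copy of a high-genus graph such as $K_{7}$ or $K_{5,5}$, both of which have genus at least $3$ by Proposition \ref{genus}. The key observation is that $\Gamma'(R)$ contains far more vertices than $\Gamma_r(R)$, since every non-zero non-unit element is a vertex, not merely every principal ideal. In particular, when $n \geq 3$ the ring has at least three nontrivial idempotent-supported elements, and products of fields already force large complete or complete bipartite subgraphs.

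First I would reduce to the smallest cases. Using Lemma \ref{genusofblocks} together with the known genus-one and planarity classifications (Theorems \ref{cozerogenusfields} and \ref{cozerogenuslocalfield}), I can dispose of the rings already classified as having genus $0$ or $1$, so it suffices to rule out genus exactly $2$. For $n \geq 4$ I would first handle the all-fields case $\mathbb{F}_2 \times \mathbb{F}_2 \times \mathbb{F}_2 \times \mathbb{F}_2$ and show it already contains a $K_{5,5}$-subdivision, analogous to Lemma \ref{genusgreaterthan2}; since adding more factors or enlarging any factor only adds vertices and edges, monotonicity of genus under taking subgraphs finishes $n \geq 4$. The remaining work concentrates on $n = 3$, that is $R \cong R_1 \times R_2 \times R_3$.

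For $n = 3$ I would split into cases according to how many $R_i$ are fields. In the all-fields subcase $F_1 \times F_2 \times F_3$, the vertices of the form $(u,0,0)$, $(0,u,0)$, $(0,0,u)$, and the mixed-support vertices $(u,v,0)$, $(u,0,v)$, $(0,u,v)$ are mutually adjacent whenever their supports are incomparable, and by choosing enough unit-vectors (using that each $|F_i| \geq 2$) I expect to extract a $K_{5,5}$ or a large $K_n$; the smallest field product $\mathbb{Z}_2 \times \mathbb{Z}_2 \times \mathbb{Z}_2$ must be treated separately, but it is already planar by Theorem \ref{cozeroplanarall}, and the next ones up (with one field of size $\geq 3$ or four factors) should yield the required high-genus minor. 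When some $R_i$ is local but not a field, its maximal ideal $\mathcal{M}_i$ contributes additional nontrivial vertices $(m, \ast, \ast)$, and I would use the containment condition $x \notin Ry$ to build even more adjacencies, again forcing a subgraph of genus at least $3$.

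The main obstacle will be the delicate borderline cases at $n = 3$ where the factors are as small as possible, for instance $\mathbb{Z}_2 \times \mathbb{Z}_2 \times \mathbb{Z}_3$ or $\mathbb{Z}_2 \times \mathbb{Z}_2 \times \mathbb{Z}_4$, since here the vertex count is modest and a crude $K_{5,5}$ may fail to appear; these demand a careful hand-construction of the relevant subgraph together with a Euler-formula argument via Lemma \ref{eulerformulagenus} (counting vertices, edges, and the forced minimum face-length of a bipartite or triangle-free piece) to push the genus strictly above $2$, exactly as is done repeatedly in the proof of Theorem \ref{redcozerogenus2}. I anticipate that the cleanest uniform argument is to always produce a concrete $K_{5,5}$-subdivision and invoke $g(K_{5,5}) = 3$, reserving the finer Euler-characteristic counting only for the genuinely tight small rings.
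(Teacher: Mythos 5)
Your overall strategy (exhibit $K_{5,5}$ or $K_{8,4}$ subdivisions for all but finitely many rings, cite the planarity and genus-one classifications for the small ones, and treat the tight borderline cases by hand) is the same as the paper's, and the $n\ge 4$ reduction and the generic $n=3$ cases do go through essentially as you describe: the paper's $K_{5,5}$ for $n\ge 4$ is built entirely from $0$--$1$ vectors, whose mutual adjacency depends only on incomparability of supports, so it sits inside every such product regardless of the sizes of the factors (your ``enlarging a factor only adds vertices and edges'' should be rephrased this way, since the graphs of different rings are not literally nested as subgraphs). One factual slip: $g(K_7)=1$, not $\ge 3$; among complete graphs you would need $K_9$ to force genus at least $3$, so $K_{5,5}$ (and $K_{8,4}$, $K_{3,12}$) are the right targets.

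The genuine gap is in your fallback tool for the tight cases. An Euler-formula count on a bipartite or triangle-free piece cannot close the hardest case, $R\cong R_1\times\mathbb{Z}_2\times\mathbb{Z}_2$ with $|R_1|=4$: here $\Gamma'(R)$ has only $13$ vertices and $41$ edges, so the bound $f\le 2e/3$ combined with Lemma \ref{eulerformulagenus} yields only $g\ge 2$, and an Euler count on a bipartite subgraph would amount to producing a $K_{5,5}$-type subdivision, which is not available in a graph this small. The paper instead runs a multi-stage face-tracing argument: it locates a $K_{3,3}$, shows that the vertices $(1,0,1)$, $(0,1,0)$, $(0,0,1)$ cannot be inserted into any face of a toroidal embedding without crossings (forcing $g\ge 2$), and then shows that the remaining vertices $(u_1,0,0)$, $(u_2,0,0)$, $(1,0,0)$, all adjacent to $(0,1,0)$, $(0,1,1)$ and $(0,0,1)$, cannot be added to a genus-two embedding either, forcing $g\ge 3$. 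Some embedding-specific argument of this finer type is unavoidable for that case, and your plan does not supply one; by contrast, the other borderline ring $\mathbb{Z}_3\times\mathbb{Z}_3\times\mathbb{Z}_2$, which you do not single out, is handled in the paper exactly as your plan suggests, via an explicit $K_{5,5}$ subdivision.
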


\begin{proof}
    On contrary, suppose that $g(\Gamma'(R)) = 2$. First assume that $n\ge 4$. Consider the vertices $x_1= (0,1,0,1,0, \ldots,0)$, $x_2= (0,1,0,0, \ldots 0)$, $x_3= (0,1,1,1,0,\ldots,0)$, $x_4= (0,1,1,0, \ldots,0)$, $x_5= (0,0,1,1,0,\ldots,0)$, $x_6= (1,0,0,1,0, \ldots,0)$, $x_7= (1,0,0,0, \ldots,0)$, $x_8= (1,0,1,1,0, \ldots,0)$, $x_9= (1,0,1,0, \ldots,0)$, $x_{10}= (1,1,0,0, \ldots,0)$, $x_{11}= (0,0,1,0, \ldots,0)$ and $x_{12}= (1,1,0,1,0, \ldots,0)$ of $\Gamma'(R)$. Then notice that the graph induced by the set $A = \{x_1$,$x_2$,$x_3$,$x_4$,$x_5$,$x_6$,$x_7$,$x_8$, $x_9$,$x_{10}$,$x_{11}$,$x_{12}\}$ contains a subgraph homeomorphic to $K_{5,5}$ (see Figure \ref{genusK55}). By Proposition \ref{genus}, we have $g(\Gamma'(R)) \ge 3$, a contradiction. It follows that $n=3$ and so $R \cong R_1 \times R_2 \times R_3$. Assume that $|R_i| \ge 3$ for each $i \in \{ 1,2,3\}$. Consider the set $B =  \{y_1, y_2, y_3, y_4, y_5, z_1, z_2, z_3, z_4, z_5 \}$, where $y_1 = (0,1,0)$, $y_2 = (0,1,1)$, $y_3 = (0,1,w)$, $y_4 = (0,v,0)$, $y_5 = (0,v,1)$, $z_1= (1,0,0)$, $z_2 = (1,0,1)$, $z_3 = (1,0,w)$, $z_4 = (u,0,0)$ and $z_5 = (u,0,1)$ such that $u \in R_1 \setminus \{0, 1\}$, $v \in R_2 \setminus \{0, 1\}$ and $ w \in R_3 \setminus \{0, 1\}$. Then the graph induced by the set $B$ has a subgraph isomorphic to $K_{5,5}$, a contradiction. It follows that $|R_i| \le 2$ for some $i$. Without loss of generality assume that $|R_1| \ge 4$, $|R_2| \ge 3$ and $|R_3| =2$. Then the graph induced by the vertices $x_1 = (0,1,0)$, $x_2 = (1,1,0)$, $x_3 = (u_1,1,0)$, $x_4 = (u_2,1,0)$, $x_5 = (0,v,0)$, $x_6= (1,v,0)$, $x_7 = (u_1,v,0)$, $x_8 = (u_2,v,0)$, $y_1 = (0,0,1)$, $y_2 = (1,0,1)$, $y_3 = (u_1,0,1)$ and $y_4 = (u_2,0,1)$, where $u_1, u_2 \in R_1 \setminus \{0, 1\}$, $ v \in R_2 \setminus \{0, 1\}$, is isomorphic to $K_{8,4}$, a contradiction. Thus, either $|R_1| < 4$ or $|R_2| <3$. Next, assume that $|R_2| = 2$. If $|R_1| \ge 5$, then consider the set $D =  \{y_1, y_2, y_3, y_4, y_5, z_1, z_2, z_3, z_4, z_5 \}$, where $y_1 = (0,1,0)$, $y_2 = (u_1,1,0)$, $y_3 = (u_2,1,0)$, $y_4 = (u_3,1,0)$, $y_5 = (1,1,0)$, $z_1= (0,0,1)$, $z_2 = (u_1,0,1)$, $z_3 = (u_2,0,1)$, $z_4 = (u_3,0,1)$ and $z_5 = (1,0,1)$ such that $u_i \in R_1 \setminus \{0, 1\}$ for each $i$ ($1 \le i \le 3$). The graph induced by the set $D$ contains a subgraph isomorphic to $K_{5,5}$, which is not possible. By Theorem \ref{cozeroplanarall} and Theorem \ref{cozerogenusfields}, $R$ is isomorphic to $\mathbb{Z}_3 \times \mathbb{Z}_3 \times \mathbb{Z}_2$ or $R_1 \times \mathbb{Z}_2 \times \mathbb{Z}_2$, where $|R_1|=4$. First, let $R \cong \mathbb{Z}_3 \times \mathbb{Z}_3 \times \mathbb{Z}_2$. Consider the set $X = \{x_1, x_2, x_3, x_4, x_5, y_1, y_2, y_3, y_4, y_5, z_1, z_2 \}$, where $x_1 = (1,0,0) $, $x_2 = (1,1,0) $, $x_3 = (1,2,0) $, $x_4 = (2,1,0) $, $x_5 = (2,2,0) $, $y_1 = (1,0,1) $, $y_2 = (2,0,1) $, $y_3 = (0,1,1) $, $y_4 = (0,2,1) $, $y_5 = (0,0,1) $, $z_1 = (0,1,0) $ and $z_2 = (0,2,0)$. Then the subgraph induced by the set $X$ is homeomorphic to $K_{5,5}$, where $\{ x_1, x_2, x_3, x_4, x_5 \}$ and $\{ y_1, y_2, y_3, y_4, y_5\}$ are the two partition sets such that $x_1 \sim z_1 \sim y_1$ and $x_1 \sim z_2 \sim y_2$. Consequently, $g(\Gamma'(\mathbb{Z}_3 \times \mathbb{Z}_3 \times \mathbb{Z}_2)) \ge 3$, a contradiction.

 Now, let $R \cong R_1 \times \mathbb{Z}_2 \times \mathbb{Z}_2$, where $|R_1|=4$. Consider the set $X =\{(0,1,1)$, $(u_1,0,1)$, $(u_2,0,1)$, $(u_1,1,0)$, $(u_2,1,0)$, $(1,1,0)\}$, where $u_1, u_2 \in R_1\setminus \{ 1, 0\}$. Let $H = \Gamma'(X)- \{e_1, e_2\}$, where $e_1 = \big((0,1,1),(u_1,0,1)\big)$ and $e_2 = \big((0,1,1),(u_2,0,1)\big)$. Then $H$ is a subgraph of $\Gamma'(R)$ which is isomorphic to $K_{3,3}$ (see Figure \ref{K33subgraphcozero}).
    \begin{figure}[h!]
    \centering
    \includegraphics[width=0.5 \textwidth]{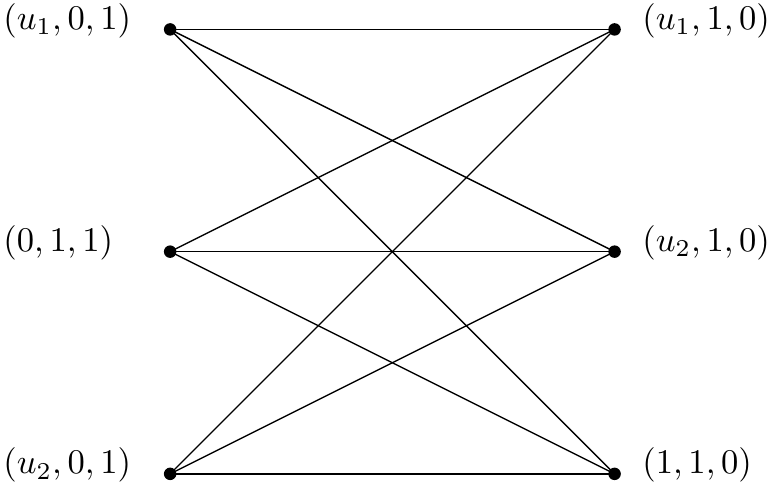}
    \caption{ The subgraph $H$ of $\Gamma'(R_1 \times \mathbb{Z}_2 \times \mathbb{Z}_2)$, where $|R_1| =4$.}
    \label{K33subgraphcozero}
    \end{figure}
It follows that $g(\Gamma'(H)) = 1$ and so $g(\Gamma'(R)) \ge 1$. Suppose that $g(\Gamma'(R)) = 1$. Now to embed $\Gamma'(R)$ in $\mathbb{S}_1$, first we insert the vertices $(1,0,1)$, $(0,1,0)$, $(0,0,1)$ and their incident edges in an embedding of $H$ in $\mathbb{S}_1$. Note that $(1,0,1) \sim (0,1,0) \sim (0,0,1)$. These vertices must be inserted in the same face $F'$. Both the vertices $(1,0,1)$, and $(0,0,1)$ are adjacent with the vertices $(u_1,1,0)$, $(u_2,1,0)$ and $(1,1,0)$. It follows that $F'$ must contain the vertices $(u_1,1,0)$, $(u_2,1,0)$ and $(1,1,0)$. Consequently, the insertion of the vertices $(1,0,1)$ and $(0,0,1)$ and their incident edges in $F'$ is not possible without edge crossings. Let $H'$ be the graph induced by the vertex set $V(H) \cup \{ (1,0,1),  (0,1,0), (0,0,1)\} $. It follows that $g(\Gamma'(H')) \ge 2$ and so $g(\Gamma'(R)) \ge 2$. Suppose that $g(\Gamma'(H')) = g(\Gamma'(R)) = 2$. To embed $\Gamma'(R)$ in $\mathbb{S}_2$, we insert the vertices $(u_1,0,0)$, $(u_2,0,0)$, $(1,0,0)$ and their incident edges in an embedding of $H'$ in $\mathbb{S}_2$. Note that all three vertices $(u_1,0,0)$, $(u_2,0,0)$ and $(1,0,0)$ are adjacent with the vertices $(0,1,0)$, $(0,1,1)$ and $(0,0,1)$. Since the degree of each of the vertex  $(0,1,0)$, $(0,1,1)$ and $(0,0,1)$ is greater than $2$ in $H'$, therefore these vertices can be common in exactly one face $F''$. It follows that the vertices $(u_1,0,0)$, $(u_2,0,0)$ and $(1,0,0)$ must be inserted in $F''$, which is not possible without edge crossings. Therefore, $g(\Gamma'(R)) \ge 3$, a contradiction. This completes our proof.
\end{proof}

 \noindent\textbf{Proof of Theorem \ref{genusofR1R2}:}
  Let $R \cong R_1 \times R_2 \times \cdots \times R_n$ $(n \ge 2)$ be a non-local commutative ring, where each $R_i$ is a local ring with maximal ideal $\mathcal{M}_i$. First suppose that $g(\Gamma'(R)) =2$. By Lemma \ref{genusofR1R2R3}, we have $R \cong R_1 \times R_2$. If both $R_1$ and $R_2$ are fields, then note that $\Gamma'(R) \cong K_{|R_1|-1, |R_2|-1}$. Consequently, by Proposition \ref{genus}, $R$ is isomorphic to one of the rings: $\mathbb{F}_4 \times \mathbb{F}_8$, $ \mathbb{F}_4 \times \mathbb{F}_9$, $ \mathbb{F}_4 \times \mathbb{Z}_{11}$, $ \mathbb{Z}_5 \times \mathbb{Z}_7$.

Now suppose that both $R_1$ and $R_2$ are not fields. Let $|R_1| \ge 4$ and $|R_2| \ge 8$. Then $|U(R_1)| \ge 2$ and $|U(R_2)| \ge 4$. Consider the set $X = \{ x_1, x_2, x_3, x_4, x_5, x_6, x_7, x_8, y_1, y_2, y_3, y_4 \}$, where $x_1= (u_1,0)$, $x_2= (u_2,0)$, $x_3= (a,0)$, $x_4= (a,b_1)$, $x_5= (a,b_2)$, $x_6= (u_1,b_1)$, $x_7= (u_1,b_2)$, $x_8= (u_2,b_1)$, $y_1= (0,v_1)$, $y_2= (0,v_2)$, $y_3= (0,v_3)$ and $y_4= (0,v_4)$ such that $a \in  \mathcal{M}_1^*$, $b_1, b_2 \in  \mathcal{M}_2^*$, $u_i \in U(R_1)$, $v_j \in U(R_2)$ for each $i,j$. Then the graph induced by the set $X$ contains a subgraph isomorphic to $K_{8,4}$. By Proposition \ref{genus}, we have $g(\Gamma'(R)) \ge 3$, a contradiction. Consequently, $R$ is isomorphic to one of the rings: $\mathbb{Z}_4 \times \mathbb{Z}_4$, $ \mathbb{Z}_4 \times \dfrac{\mathbb{Z}_2[x]}{( x^2 )}$, $ \dfrac{\mathbb{Z}_2[x]}{( x^2 )} \times \dfrac{\mathbb{Z}_2[x]}{( x^2 )}$.

Without loss of generality, suppose that $R_2$ is a field and $R_1$ is not a field i.e. $R \cong R_1 \times F_2$. Assume that $|R_1| \ge 8$ and $|F_2| \ge 3$. If $|\mathcal{M}_1^*| = 2$, then $|U(R_1)| \ge 5$. Consider the set $A = \{ x_1, x_2, x_3, x_4, x_5, y_1, y_2, y_3, y_4, y_5 \}$, where $x_1= (u_1,0)$, $x_2= (u_2,0)$, $x_3= (u_3,0)$, $x_4= (u_4,0)$, $x_5= (u_5,0)$, $y_1= (0,v_1)$, $y_2= (0,v_2)$, $y_3= (a,v_1)$, $y_4= (a,v_2)$ and $y_5= (b,v_1)$ such that $a,b \in  \mathcal{M}_1^*$, $u_i \in U(R_1)$, $v_j \in U(R_2)$ for each $i,j$. Then the graph induced by the set $A$ contains a subgraph isomorphic to $K_{5,5}$, a contradiction. If $|\mathcal{M}_1^*| \ge 3$, then $|U(R_1)| \ge 4$.  Consider the vertices $x_1= (u_1,0)$, $x_2= (u_2,0)$, $x_3= (u_3,0)$, $x_4= (u_4,0)$, $y_1= (0,v_1)$, $y_2= (0,v_2)$, $y_3= (a,v_1)$, $y_4= (a,v_2)$, $y_5= (b,v_1)$, $y_6= (b,v_2)$, $y_7= (c,v_1)$ and $y_8= (c,v_2)$, where $a,b,c \in  \mathcal{M}_1^*$, $u_i \in U(R_1)$, $v_j \in U(R_2)$ for each $i,j$. Note that, the graph induced by the set $\{ x_1, x_2, x_3, x_4, y_1, y_2, y_3, y_4, y_5, y_6, y_7, y_8 \}$ contains a subgraph isomorphic to $K_{4,8}$, which is not possible. Thus, either $|R_1| < 8$ or $|F_2| =2$.

First suppose that $|R_1| < 8$. Since $R_1$ is a local ring which is not a field, we have $|R_1| = 4$. If $2\le |F_2| \le 7$, then by Theorem \ref{cozeroplanarall} and Theorem \ref{cozerogenuslocalfield}, we have $g(\Gamma'(R)) \le 1$. Assume that $|F_2| \ge 13 $. Consider the set $Z = \{ x_1$,$x_2$,$x_3$,$y_1$,$y_2$,$y_3$,$y_4$,$y_5$,$y_6$,$y_7$,$y_8$,$y_9$,$y_{10}$,$y_{11}$,$y_{12} \}$, where $x_1= (u_1, 0)$, $x_2= (u_2, 0)$, $x_3= (a,0)$, $y_1= (0,v_1)$, $y_2= (0,v_2)$, $y_3= (0,v_3)$, $y_4= (0,v_4)$, $y_5= (0,v_5)$, $y_6= (0,v_6)$, $y_7= (0,v_7)$, $y_8= (0,v_8)$, $y_9= (0,v_9)$, $y_{10}= (0,v_{10})$, $y_{11}= (0,v_{11})$ and $y_{12}= (0,v_{12})$ such that $a \in  \mathcal{M}_1^*$, $u_i \in U(R_1)$, $v_j \in U(R_2)$ for each $i,j$. Then the subgraph induced by the set $Z$ is isomorphic to $K_{3,12}$, a contradiction. Consequently, $|F_2|= 8,9, 11$. Therefore, $R$ is isomorphic to one of the rings: 
\[ \mathbb{Z}_4 \times \mathbb{F}_8, \ \dfrac{\mathbb{Z}_2[x]}{( x^2 )}\times \mathbb{F}_8, \ \mathbb{Z}_4 \times \mathbb{F}_9, \  \dfrac{\mathbb{Z}_2[x]}{( x^2 )} \times \mathbb{F}_9, \ \mathbb{Z}_4 \times \mathbb{Z}_{11}, \ \dfrac{\mathbb{Z}_2[x]}{( x^2 )}\times \mathbb{Z}_{11}. \]

We may now suppose that $|F_2| = 2$. If $|R_1| = 8,9$, Theorem \ref{cozerogenuslocalfield}, $g(\Gamma'(R)) = 1$. Let $|R_1| \ge 16$. Assume that $|\mathcal{M}_1^*| \ge 4$. It implies that $|U(R_1)| \ge 5$. Consider the set $X = \{ x_1,x_2, x_3, x_4, x_5, y_1, y_2, y_3, y_4, y_5 \}$, where $x_1= (u_1,0)$, $x_2= (u_2,0)$, $x_3= (u_3,0)$, $x_4= (u_4,0)$, $x_5= (u_5,0)$, $y_1= (0,1)$, $y_2= (a_1,1)$, $y_3= (a_2,1)$, $y_4= (a_3,1)$ and $y_5= (a_4,1)$ such that $u_i \in U(R_1)$, $a_j \in  \mathcal{M}_1^*$ for each $i,j$. Then the graph induced by the set $X$ has a subgraph isomorphic to $K_{5,5}$, a contradiction. Now, let $|\mathcal{M}_1^*| = 3$. Then $|U(R_1)| \ge 12 $. Consider the vertices $x_1= (u_1,0)$, $x_2= (u_2,0)$, $x_3= (u_3,0)$, $x_4= (u_4,0)$, $x_5= (u_5,0)$, $x_6= (u_6,0)$, $x_7= (u_7,0)$, $x_8= (u_8,0)$, $y_1= (0,1)$, $y_2= (a_1, 1)$, $y_3= (a_2,1)$ and $y_4= (a_3,1)$, where $u_i \in U(R_1)$, $a_j \in  \mathcal{M}_1^*$ for each $i,j$. Then the graph induced by the set $\{ x_1, x_2, x_3, x_4, x_5, x_6, x_7, x_8, y_1, y_2, y_3, y_4\}$ contains a subgraph isomorphic to $K_{8,4}$, which is not possible. Thus $|F_2| \neq 2$. 

Converse follows from the Figures \ref{genusfig1}-\ref{genusfig4}, in which $a \in \mathcal{M}_1^*$, $b \in \mathcal{M}_2^*$, $u_i \in U(R_1)$ and $v_j \in U(R_2)$ for each $i,j$.  

\begin{figure}[h!]
\centering
\includegraphics[width=0.57 \textwidth]{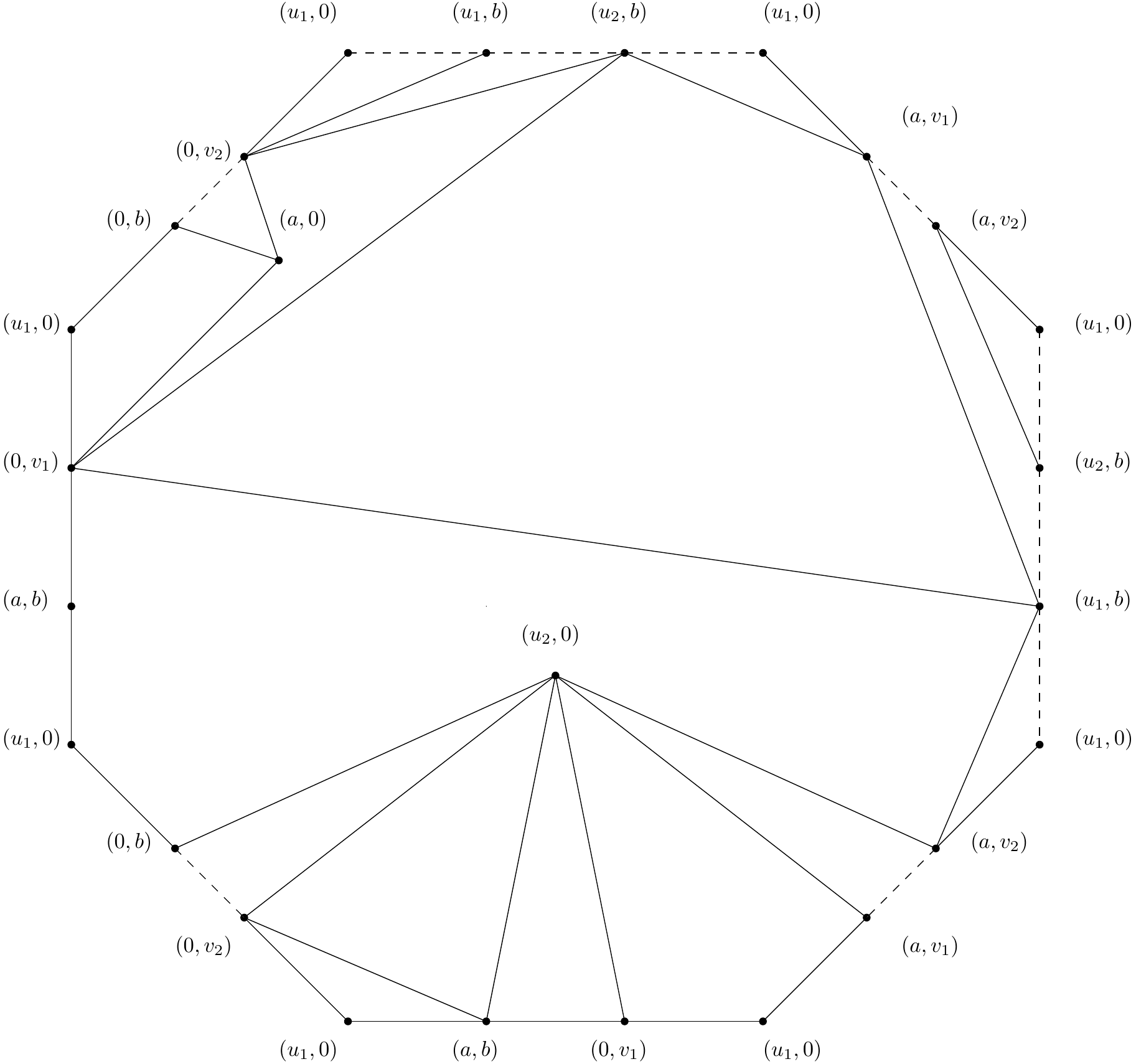}
\caption{Embedding of $\Gamma'(\mathbb{Z}_4 \times \mathbb{Z}_4)$, $\Gamma'\left(\mathbb{Z}_4 \times \dfrac{\mathbb{Z}_2[x]}{( x^2 )}\right)$ and $\Gamma'\left(\dfrac{\mathbb{Z}_2[x]}{( x^2 )} \times \dfrac{\mathbb{Z}_2[x]}{( x^2 )}\right)$ in $\mathbb{S}_2$.}
\label{genusfig1}
\end{figure}
\begin{figure}[h!]
\centering
\includegraphics[width=0.57 \textwidth]{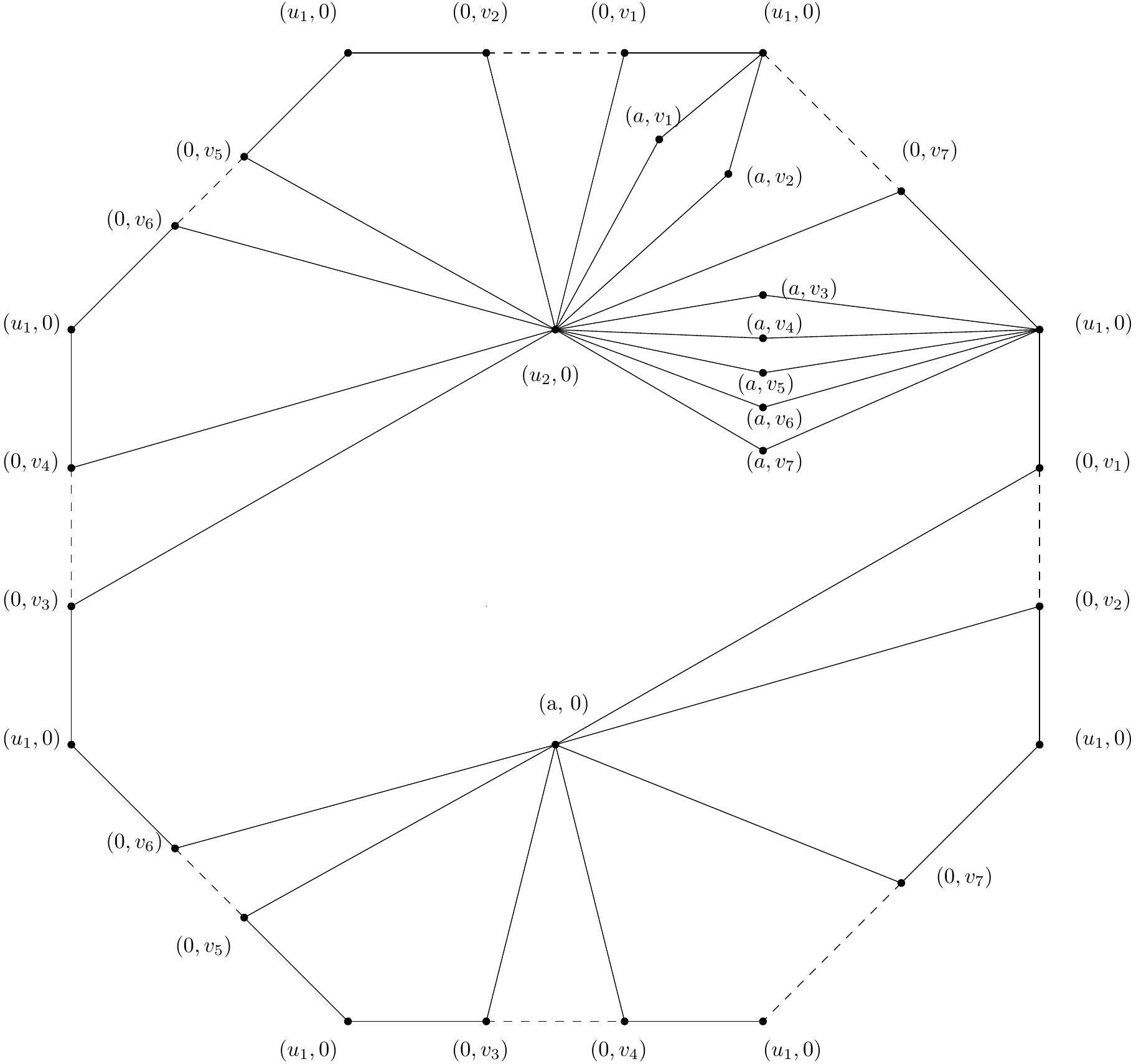}
\caption{Embedding of $\Gamma'(\mathbb{Z}_4 \times \mathbb{F}_8)$ and $\Gamma'\left(\dfrac{\mathbb{Z}_2[x]}{( x^2 )} \times \mathbb{F}_8\right)$ in $\mathbb{S}_2$.}
\label{genusfig2}
\end{figure}
\begin{figure}[h!]
\centering
\includegraphics[width=0.57 \textwidth]{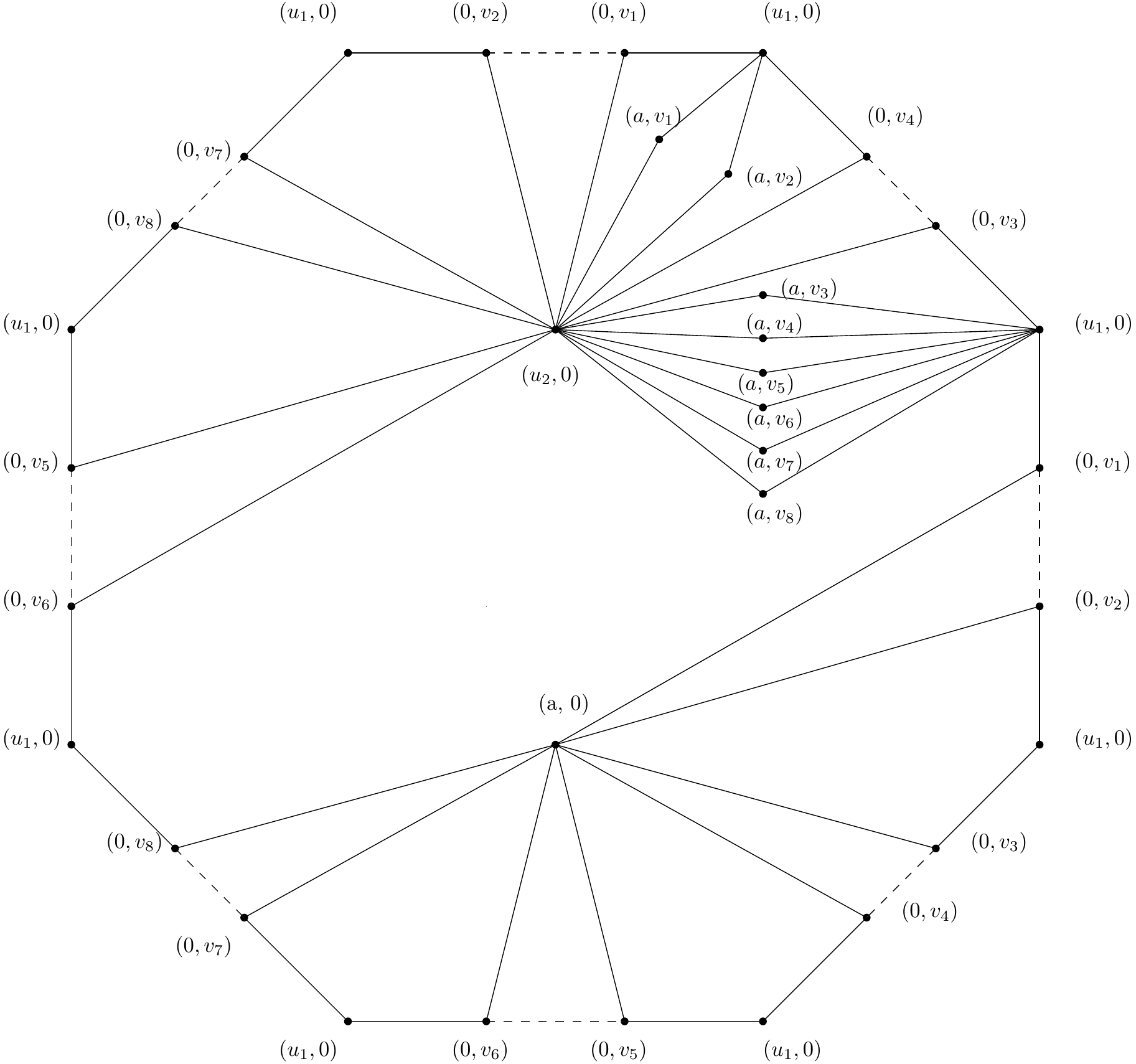}
\caption{Embedding of $\Gamma'(\mathbb{Z}_4 \times \mathbb{F}_9)$ and $\Gamma'\left(\dfrac{\mathbb{Z}_2[x]}{( x^2 )} \times \mathbb{F}_9\right)$ in $\mathbb{S}_2$.}
\label{genusfig3}
\end{figure}
\begin{figure}[h!]
\centering
\includegraphics[width=0.57 \textwidth]{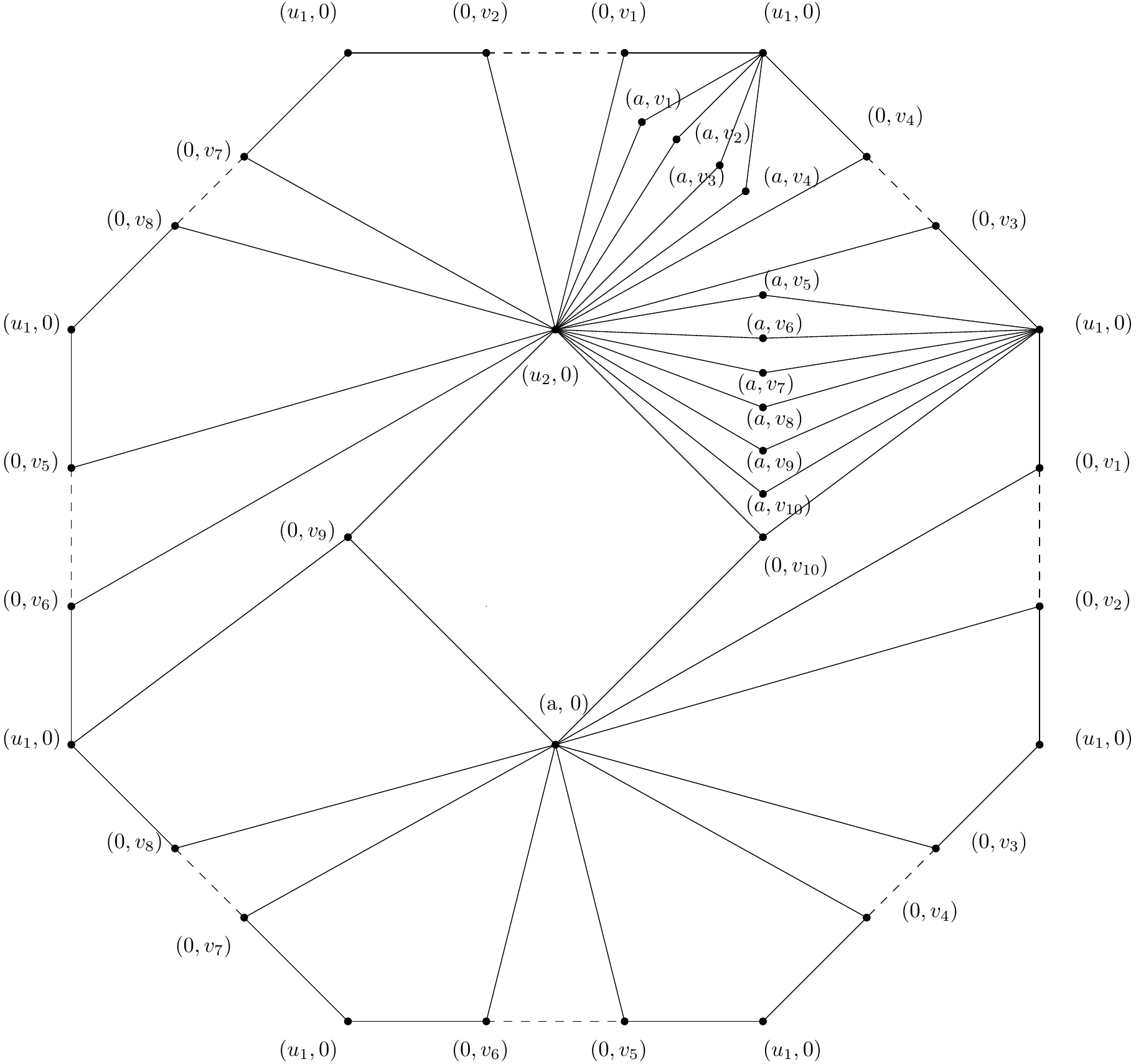}
\caption{Embedding of $\Gamma'(\mathbb{Z}_4 \times \mathbb{Z}_{11})$ and $\Gamma'\left(\dfrac{\mathbb{Z}_2[x]}{( x^2 )} \times \mathbb{Z}_{11}\right)$ in $\mathbb{S}_2$.}
\label{genusfig4}
\end{figure}

%%%%%%%%%%%%%%%%%%%%%%%%%%%%%%%%%%%%%%%%%%%%%%%%%%%%%%%%%%%%%%%%%%%%%%%%%%%%%%%%%%%%%%%%%%%%%%%%%%%%%
\newpage

% \bibliographystyle{abbrv}
% 	\bibliography{reference}

\end{document}